\providecommand{\U}[1]{\protect \rule{.1in}{.1in}}
\newtheorem{theorem}{Theorem}
\theoremstyle{plain}
\newtheorem{corollary}{Corollary}
\newtheorem{definition}{Definition}
\newtheorem{lemma}{Lemma}
\newtheorem{remark}{Remark}
\numberwithin{equation}{section}
\begin{document}
\title[{fractional type }multilinear commutators ]{{fractional type }multilinear commutators generated by fractional integral
with rough variable kernel and local Campanato functions on generalized
vanishing local Morrey spaces}
\author{F.Gurbuz}
\address{ANKARA UNIVERSITY, FACULTY OF SCIENCE, DEPARTMENT OF MATHEMATICS, TANDO\u{G}AN
06100, ANKARA, TURKEY }
\curraddr{}
\email{feritgurbuz84@hotmail.com}
\urladdr{}
\thanks{}
\thanks{}
\thanks{}
\date{}
\subjclass[2010]{ 42B20, 42B25, 42B35}
\keywords{{Sublinear operator; fractional integral operator; rough variable kernel;
generalized local (central) Morrey spaces; generalized vanishing local Morrey
space; fractional type multilinear commutator; }local Campanato space}
\dedicatory{ }
\begin{abstract}
In this paper, we consider the boundedness of fractional type multilinear
commutators generated by fractional integral with rough variable kernel and
local Campanato functions on both generalized local {(central)} Morrey spaces
and generalized vanishing local Morrey spaces, under generic size conditions
which are satisfied by most of the operators in harmonic analysis, respectively.

\end{abstract}
\maketitle

\section{Introduction and main lemmas}

The classical Morrey spaces $L_{p,\lambda}$ were introduced by Morrey
\cite{Morrey} in 1938 to study the local behavior of solutions of second order
elliptic partial differential equations (PDEs). Later, there were many
applications of Morrey space to the Navier-Stokes equations (see
\cite{Mazzucato}), the Schr\"{o}dinger equations (see \cite{Ruiz}) and the
elliptic problems with discontinuous coefficients (see \cite{Caf, FazPalRag,
Pal}). The study of the operators of harmonic analysis in vanishing Morrey
space, in fact has been almost not touched. A version of the classical Morrey
space $L_{p,\lambda}({\mathbb{R}^{n}})$ where it is possible to approximate by
"nice" functions is the so called vanishing Morrey space $VL_{p,\lambda
}({\mathbb{R}^{n}})$ has been introduced by Vitanza in \cite{Vitanza1} and has
been applied there to obtain a regularity result for elliptic PDEs. This is a
subspace of functions in $L_{p,\lambda}({\mathbb{R}^{n}})$, which satisfies
the condition%
\[
\lim_{r\rightarrow0}\sup_{\underset{0<t<r}{x\in{\mathbb{R}^{n}}}}%
t^{-\frac{\lambda}{p}}\Vert f\Vert_{L_{p}(B(x,t))}=0.
\]
Later in \cite{Vitanza2} Vitanza has proved an existence theorem for a
Dirichlet problem, under weaker assumptions than in \cite{Miranda} and a
$W^{3,2}$ regularity result assuming that the partial derivatives of the
coefficients of the highest and lower order terms belong to vanishing Morrey
spaces depending on the dimension. Also Ragusa has proved a sufficient
condition for commutators of fractional integral operators to belong to
vanishing Morrey spaces $VL_{p,\lambda}({\mathbb{R}^{n}})$
(\cite{PerRagSamWall, RagusaJGlOpt}). For the properties and applications of
vanishing Morrey spaces, see also \cite{Cao-Chen}.

First of all, we recall some basic definitions and notations used in the paper.

Throughout the paper we assume that $x\in{\mathbb{R}^{n}}$ and $r>0$, and also
let $B(x,r)$ denote the open ball centered at $x$ of radius $r$, $B^{C}(x,r)$
denote its complement and $|B(x,r)|$ be the Lebesgue measure of the ball
$B(x,r)$, and $|B(x,r)|=v_{n}r^{n}$, where $v_{n}=|B(0,1)|$.

Recall that the concept of the generalized local (central) Morrey space
$LM_{p,\varphi}^{\{x_{0}\}}$ has been introduced in \cite{BGGS} and studied in
\cite{Gurbuz, Gurbuz3}.

\begin{definition}
\textbf{(Generalized local (central) Morrey space) }Let $\varphi(x,r)$ be a
positive measurable function on ${\mathbb{R}^{n}}\times(0,\infty)$ and $1\leq
p<\infty$. For any fixed $x_{0}\in{\mathbb{R}^{n}}$ we denote by
$LM_{p,\varphi}^{\{x_{0}\}}\equiv LM_{p,\varphi}^{\{x_{0}\}}({\mathbb{R}^{n}%
})$ the generalized local Morrey space, the space of all functions $f\in
L_{p}^{loc}({\mathbb{R}^{n}})$ with finite quasinorm
\[
\Vert f\Vert_{LM_{p,\varphi}^{\{x_{0}\}}}=\sup \limits_{r>0}\varphi
(x_{0},r)^{-1}\,|B(x_{0},r)|^{-\frac{1}{p}}\, \Vert f\Vert_{L_{p}(B(x_{0}%
,r))}<\infty.
\]

\end{definition}

According to this definition, we recover the local Morrey space $LL_{p,\lambda
}^{\{x_{0}\}}$ under the choice $\varphi(x_{0},r)=r^{\frac{\lambda-n}{p}}$:
\[
LL_{p,\lambda}^{\{x_{0}\}}=LM_{p,\varphi}^{\{x_{0}\}}\mid_{\varphi
(x_{0},r)=r^{\frac{\lambda-n}{p}}}.
\]
For the properties and applications of generalized local (central) Morrey
spaces $LM_{p,\varphi}^{\{x_{0}\}}$, see also \cite{BGGS, Gurbuz, Gurbuz3}.

\begin{definition}
\cite{BGGS, Gurbuz} Let $1\leq p<\infty$ and $0\leq \lambda<\frac{1}{n}$. A
local Campanato function $b\in L_{p}^{loc}\left(  {\mathbb{R}^{n}}\right)  $
is said to belong to the $LC_{p,\lambda}^{\left \{  x_{0}\right \}  }\left(
{\mathbb{R}^{n}}\right)  $ (local Campanato space), if%
\begin{equation}
\left \Vert b\right \Vert _{LC_{p,\lambda}^{\left \{  x_{0}\right \}  }}%
=\sup_{r>0}\left(  \frac{1}{\left \vert B\left(  x_{0},r\right)  \right \vert
^{1+\lambda p}}\int \limits_{B\left(  x_{0},r\right)  }\left \vert b\left(
y\right)  -b_{B\left(  x_{0},r\right)  }\right \vert ^{p}dy\right)  ^{\frac
{1}{p}}<\infty,\label{e51}%
\end{equation}
where%
\[
b_{B\left(  x_{0},r\right)  }=\frac{1}{\left \vert B\left(  x_{0},r\right)
\right \vert }\int \limits_{B\left(  x_{0},r\right)  }b\left(  y\right)  dy.
\]
Define%
\[
LC_{p,\lambda}^{\left \{  x_{0}\right \}  }\left(  {\mathbb{R}^{n}}\right)
=\left \{  b\in L_{p}^{loc}\left(  {\mathbb{R}^{n}}\right)  :\left \Vert
b\right \Vert _{LC_{p,\lambda}^{\left \{  x_{0}\right \}  }}<\infty \right \}  .
\]

\end{definition}

\begin{remark}
In \cite{LuYang1}, Lu and Yang have introduced the central BMO space
$CBMO_{p}({\mathbb{R}^{n}})=LC_{p,0}^{\{0\}}({\mathbb{R}^{n}})$. Note that
$BMO({\mathbb{R}^{n}})\subset \bigcap \limits_{p>1}LC_{p}^{\left \{
x_{0}\right \}  }({\mathbb{R}^{n}})$, $1\leq p<\infty$. Moreover, one can
imagine that the behavior of $LC_{p}^{\left \{  x_{0}\right \}  }({\mathbb{R}%
^{n}})$ may be quite different from that of $BMO({\mathbb{R}^{n}})$ (bounded
mean oscillation space{)}, since there is no analogy of the famous
John-Nirenberg inequality of $BMO({\mathbb{R}^{n}})$ for the space
$LC_{p}^{\left \{  x_{0}\right \}  }({\mathbb{R}^{n}})$.
\end{remark}

\begin{lemma}
\label{Lemma 4}\cite{BGGS, Gurbuz} Let $b$ be function in $LC_{p,\lambda
}^{\left \{  x_{0}\right \}  }\left(
\mathbb{R}
^{n}\right)  $, $1\leq p<\infty$, $0\leq \lambda<\frac{1}{n}$ and $r_{1}$,
$r_{2}>0$. Then%
\begin{equation}
\left(  \frac{1}{\left \vert B\left(  x_{0},r_{1}\right)  \right \vert
^{1+\lambda p}}%
{\displaystyle \int \limits_{B\left(  x_{0},r_{1}\right)  }}
\left \vert b\left(  y\right)  -b_{B\left(  x_{0},r_{2}\right)  }\right \vert
^{p}dy\right)  ^{\frac{1}{p}}\leq C\left(  1+\ln \frac{r_{1}}{r_{2}}\right)
\left \Vert b\right \Vert _{LC_{p,\lambda}^{\left \{  x_{0}\right \}  }},\label{a}%
\end{equation}
where $C>0$ is independent of $b$, $r_{1}$ and $r_{2}$.

From this inequality $\left(  \text{\ref{a}}\right)  $, we have%
\begin{equation}
\left \vert b_{B\left(  x_{0},r_{1}\right)  }-b_{B\left(  x_{0},r_{2}\right)
}\right \vert \leq C\left(  1+\ln \frac{r_{1}}{r_{2}}\right)  \left \vert
B\left(  x_{0},r_{1}\right)  \right \vert ^{\lambda}\left \Vert b\right \Vert
_{LC_{p,\lambda}^{\left \{  x_{0}\right \}  }},\label{b}%
\end{equation}
and it is easy to see that%
\begin{equation}
\left \Vert b-\left(  b\right)  _{B}\right \Vert _{L_{p}\left(  B\right)  }\leq
C\left(  1+\ln \frac{r_{1}}{r_{2}}\right)  r^{\frac{n}{p}+n\lambda}\left \Vert
b\right \Vert _{LC_{p,\lambda}^{\left \{  x_{0}\right \}  }}.\label{c}%
\end{equation}

\end{lemma}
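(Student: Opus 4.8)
The plan is to prove the average-difference estimate \eqref{b} first, by a dyadic chaining argument, and then to read off \eqref{a} and \eqref{c} from it. The elementary building block is a doubling step. Fix $\rho>0$, put $B_{1}=B(x_{0},\rho)$ and $B_{2}=B(x_{0},2\rho)$, so that $B_{1}\subset B_{2}$ and
\[
\left\vert b_{B_{1}}-b_{B_{2}}\right\vert =\frac{1}{\left\vert B_{1}\right\vert }\left\vert \int_{B_{1}}\left( b(y)-b_{B_{2}}\right) dy\right\vert \leq \frac{1}{\left\vert B_{1}\right\vert }\int_{B_{2}}\left\vert b(y)-b_{B_{2}}\right\vert dy .
\]
By H\"older's inequality with exponents $p$ and $p'$, followed by the defining inequality \eqref{e51},
\[
\left\vert b_{B_{1}}-b_{B_{2}}\right\vert \leq \frac{\left\vert B_{2}\right\vert ^{1/p'}}{\left\vert B_{1}\right\vert }\left\vert B_{2}\right\vert ^{\frac{1+\lambda p}{p}}\left\Vert b\right\Vert _{LC_{p,\lambda}^{\{x_{0}\}}}=2^{n}\left\vert B_{2}\right\vert ^{\lambda}\left\Vert b\right\Vert _{LC_{p,\lambda}^{\{x_{0}\}}},
\]
and the same computation gives $\left\vert b_{B(x_{0},s)}-b_{B(x_{0},t)}\right\vert \leq 2^{n}\left\vert B(x_{0},t)\right\vert ^{\lambda}\left\Vert b\right\Vert _{LC_{p,\lambda}^{\{x_{0}\}}}$ whenever $B(x_{0},s)\subset B(x_{0},t)\subset B(x_{0},2s)$.

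To obtain \eqref{b} I would assume $r_{1}\geq r_{2}$ (the reverse case is entirely analogous and is not the one needed below), choose the integer $k\geq0$ with $2^{k}r_{2}\leq r_{1}<2^{k+1}r_{2}$, and telescope along the chain $B(x_{0},r_{2})\subset B(x_{0},2r_{2})\subset\cdots\subset B(x_{0},2^{k}r_{2})\subset B(x_{0},r_{1})$:
\[
\left\vert b_{B(x_{0},r_{1})}-b_{B(x_{0},r_{2})}\right\vert \leq \left\vert b_{B(x_{0},r_{1})}-b_{B(x_{0},2^{k}r_{2})}\right\vert +\sum_{i=0}^{k-1}\left\vert b_{B(x_{0},2^{i+1}r_{2})}-b_{B(x_{0},2^{i}r_{2})}\right\vert .
\]
Every term on the right is bounded by the doubling step. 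If $\lambda=0$ the sum has $k$ equal contributions and $k\leq 1+\log_{2}\frac{r_{1}}{r_{2}}$, which produces exactly the logarithmic factor; if $\lambda>0$ the bounds $2^{n}\left\vert B(x_{0},2^{i+1}r_{2})\right\vert ^{\lambda}$ form a geometric progression whose total is at most a constant multiple of its last term, which by $2^{k}r_{2}\leq r_{1}$ and $0\leq\lambda<\frac{1}{n}$ is at most a constant multiple of $\left\vert B(x_{0},r_{1})\right\vert ^{\lambda}$. In either case this gives \eqref{b} with $C$ depending only on $n$, $p$, $\lambda$.

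Then \eqref{a} follows by splitting $b(y)-b_{B(x_{0},r_{2})}=(b(y)-b_{B(x_{0},r_{1})})+(b_{B(x_{0},r_{1})}-b_{B(x_{0},r_{2})})$, applying the triangle inequality in $L_{p}(B(x_{0},r_{1}))$, bounding the first summand by \eqref{e51} and the constant second summand by \eqref{b}, and dividing through by $\left\vert B(x_{0},r_{1})\right\vert ^{\frac{1+\lambda p}{p}}$; conversely, \eqref{b} is recovered from \eqref{a} by a single use of H\"older's inequality, which is the implication indicated in the statement. Finally \eqref{c} is just \eqref{a} after inserting $\left\vert B(x_{0},r)\right\vert =v_{n}r^{n}$, so that $\left\vert B(x_{0},r)\right\vert ^{\frac{1+\lambda p}{p}}$ becomes a dimensional constant times $r^{\frac{n}{p}+n\lambda}$. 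I expect the only genuinely delicate point to be the chaining in \eqref{b} — ensuring the logarithm appears in the case $\lambda=0$ and that the geometric series regenerates the factor $\left\vert B(x_{0},r_{1})\right\vert ^{\lambda}$ in the case $\lambda>0$; everything else is routine bookkeeping of constants.
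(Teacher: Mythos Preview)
Your argument is correct and is the standard dyadic chaining proof of this lemma. Note, however, that the paper does not supply its own proof of this statement: Lemma~\ref{Lemma 4} is quoted from \cite{BGGS, Gurbuz} and used as a black box, so there is no in-paper proof to compare against. Two minor remarks on presentation: the factor $\left(1+\ln\frac{r_{1}}{r_{2}}\right)$ in \eqref{a}--\eqref{b} should be read as $\left(1+\left\vert\ln\frac{r_{1}}{r_{2}}\right\vert\right)$ to cover both orderings of $r_{1},r_{2}$ (this is how the paper actually applies it), and your derivation of \eqref{a} from \eqref{b} reverses the logical order announced in the statement, but since you also explain how \eqref{b} follows from \eqref{a} by H\"older, both directions are covered.
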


For brevity, in the sequel we use the following notation%
\[
\mathfrak{M}_{p,\varphi}\left(  f;x_{0},r\right)  :=\frac{|B(x_{0}%
,r)|^{-\frac{1}{p}}\, \Vert f\Vert_{L_{p}(B(x_{0},r))}}{\varphi(x_{0},r)}.
\]
Extending the definition of the vanishing Morrey spaces to the case of the
generalized local (central) Morrey spaces, we introduce the following definition.

\begin{definition}
\textbf{(generalized vanishing local Morrey space) }The generalized vanishing
local Morrey space $VLM_{p,\varphi}^{\left \{  x_{0}\right \}  }({\mathbb{R}%
^{n}})$ is defined as the spaces of functions $f\in LM_{p,\varphi}^{\{x_{0}%
\}}({\mathbb{R}^{n}})$ such that%
\[
\lim \limits_{r\rightarrow0}\mathfrak{M}_{p,\varphi}\left(  f;x_{0},r\right)
=0.
\]

\end{definition}

Throughout the sequel we assume that%
\begin{equation}
\lim_{r\rightarrow0}\frac{1}{\varphi(x_{0},r)}=0,\label{2}%
\end{equation}
and%
\begin{equation}
\sup_{0<r<\infty}\frac{1}{\varphi(x_{0},r)}<\infty,\label{3}%
\end{equation}
which make the spaces $VLM_{p,\varphi}^{\left \{  x_{0}\right \}  }%
({\mathbb{R}^{n}})$ non-trivial, since bounded functions with compact support
belong to this space. The space $VLM_{p,\varphi}^{\left \{  x_{0}\right \}
}({\mathbb{R}^{n}})$ is a Banach space with respect to the norm
\[
\Vert f\Vert_{VLM_{p,\varphi}^{\left \{  x_{0}\right \}  }}\equiv \Vert
f\Vert_{LM_{p,\varphi}^{\{x_{0}\}}}=\sup \limits_{r>0}\mathfrak{M}_{p,\varphi
}\left(  f;x_{0},r\right)  .
\]
The space $VLM_{p,\varphi}^{\left \{  x_{0}\right \}  }({\mathbb{R}^{n}})$ is a
closed subspace of the Banach space $LM_{p,\varphi}^{\left \{  x_{0}\right \}
}({\mathbb{R}^{n}})$, which may be shown by standard means.

Suppose that $S^{n-1}$ is the unit sphere on ${\mathbb{R}^{n}}$ $(n\geq2)$
equipped with the normalized Lebesgue measure $d\sigma \left(  x^{\prime
}\right)  $. A function $\Omega \left(  x,z\right)  $ defined on ${\mathbb{R}%
^{n}\times \mathbb{R}^{n}}$ is said to belong to $L_{\infty}({\mathbb{R}^{n}%
})\times L_{s}(S^{n-1})$ for $s>1$, if $\Omega$ satisfies the following conditions:

for any $x$, $z\in{\mathbb{R}^{n}}$ and $\lambda>0$,%

\[
\Omega(x,\lambda z)=\Omega(x,z);
\]%
\[
\Vert \Omega \Vert_{L_{\infty}({\mathbb{R}^{n}})\times L_{s}(S^{n-1})}%
:=\sup_{x\in{\mathbb{R}^{n}}}\left(
{\displaystyle \int \limits_{S^{n-1}}}
\left \vert \Omega(x,z^{\prime})\right \vert ^{s}d\sigma \left(  z^{\prime
}\right)  \right)  ^{1/s}<\infty,
\]
where $z^{\prime}=z/\left \vert z\right \vert $, for any $z\in{\mathbb{R}^{n}%
}\setminus \{0\}$.

Suppose that $T_{\Omega,\alpha}$, $\alpha \in \left(  0,n\right)  $ represents a
linear or a sublinear operator with rough variable kernel, which satisfies
that for any $f\in S({\mathbb{R}^{n}})$ with compact support and $x\notin
suppf$
\begin{equation}
|T_{\Omega,\alpha}f(x)|\leq c_{0}\int \limits_{{\mathbb{R}^{n}}}\frac
{|\Omega(x,x-y)|}{|x-y|^{n-\alpha}}\,|f(y)|\,dy,\label{e1}%
\end{equation}
where $c_{0}$ is independent of $f$ and $x$. We point out that the condition
(\ref{e1}) in the case of $\Omega \equiv1$, $\alpha=0$ has been introduced by
Soria and Weiss in \cite{SW}. The condition (\ref{e1}) is satisfied by many
interesting operators in harmonic analysis, such as the fractional
Marcinkiewicz operator, fractional maximal operator, fractional integral
operator (Riesz potential) and so on (see \cite{LLY}, \cite{SW} for details).

Then, the fractional integral operator with rough variable kernel
$\overline{T}_{\Omega,\alpha}$ is defined by%

\[
\overline{T}_{\Omega,\alpha}f(x)=\int \limits_{{\mathbb{R}^{n}}}\frac
{\Omega(x,x-y)}{|x-y|^{n-\alpha}}f(y)dy\qquad0<\alpha<n,
\]
and a related fractional maximal operator with rough variable kernel
$M_{\Omega,\alpha}$ is given by%

\[
M_{\Omega,\alpha}f(x)=\sup_{t>0}|B(x,t)|^{-1+\frac{\alpha}{n}}\int
\limits_{B(x,t)}\left \vert \Omega(x,x-y)\right \vert |f(y)|dy\qquad0<\alpha<n,
\]
where $\Omega \in L_{\infty}({\mathbb{R}^{n}})\times L_{s}(S^{n-1})$, $s>1$, is
homogeneous of degree zero with respect to the second variable $y$ on
${\mathbb{R}^{n}}$ and these operators also satisfy condition (\ref{e1}).

If $\alpha=0$, then $\overline{T}_{\Omega,\alpha}$ and $M_{\Omega,\alpha}$
becomes the singular integral operator and the Hardy-Littlewood maximal
operator by with rough variable kernels, respectively. It is obvious that when
$\Omega \equiv1$, $\overline{T}_{1,\alpha}\equiv \overline{T}_{\alpha}$ and
$M_{1,\alpha}\equiv M_{\alpha}$ are the fractional integral operator (Riesz
potential) and the fractional maximal operator, respectively.

In recent years, the mapping properties of $\overline{T}_{\Omega,\alpha}$ on
some kinds of function spaces have been studied in many papers (see
\cite{Aos1, Aos2, Chen, Muckenhoupt, Wang, Zhang} for details). In particular,
the boundedness of $\overline{T}_{\Omega,\alpha}$ for rough variable kernel in
Lebesgue spaces has been obtained by Muckenhoupt and Wheeden
\cite{Muckenhoupt} as follows:

\begin{lemma}
\label{Lemma1}\cite{Muckenhoupt} Let $0<\alpha<n$, $1<p<\frac{n}{\alpha}$ and
$\frac{1}{q}=\frac{1}{p}-\frac{\alpha}{n}$. If $\Omega \in L_{\infty
}({\mathbb{R}^{n}})\times L_{s}(S^{n-1})$, $s>\frac{n}{n-\alpha}$, for
$s^{\prime}\leq p$ or $q<s$, then we have%
\[
\left \Vert \overline{T}_{\Omega,\alpha}f\right \Vert _{L_{q}\left(
{\mathbb{R}^{n}}\right)  }\leq C\Vert \Omega \Vert_{L_{\infty}({\mathbb{R}^{n}%
})\times L_{s}(S^{n-1})}\left \Vert f\right \Vert _{L_{p}\left(  {\mathbb{R}%
^{n}}\right)  }.
\]

\end{lemma}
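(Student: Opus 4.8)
The plan is to derive the estimate from a Hedberg‑type pointwise bound for $\overline{T}_{\Omega,\alpha}$, thereby reducing everything to the boundedness of the Hardy--Littlewood maximal operator. Since $|\overline{T}_{\Omega,\alpha}f(x)|\le\int_{\mathbb{R}^{n}}\frac{|\Omega(x,x-y)|}{|x-y|^{n-\alpha}}|f(y)|\,dy$, the first step is to split $\mathbb{R}^{n}\setminus\{x\}$ into the dyadic annuli $A_{k}=\{y:2^{k}\le|x-y|<2^{k+1}\}$, so that
\[
|\overline{T}_{\Omega,\alpha}f(x)|\le\sum_{k\in\mathbb{Z}}\int_{A_{k}}\frac{|\Omega(x,x-y)|}{|x-y|^{n-\alpha}}|f(y)|\,dy .
\]
On $A_{k}$ one has $|x-y|\approx2^{k}$, so after pulling out $|x-y|^{\alpha-n}\approx 2^{k(\alpha-n)}$ and applying H\"older's inequality with exponent $s$, the $k$‑th term is $\lesssim 2^{k(\alpha-n)}\big(\int_{A_{k}}|\Omega(x,x-y)|^{s}dy\big)^{1/s}\big(\int_{A_{k}}|f|^{s'}\big)^{1/s'}$. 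Passing to polar coordinates centered at $x$ and using that $\Omega(x,\cdot)$ is homogeneous of degree zero gives $\big(\int_{B(x,2^{k+1})}|\Omega(x,x-y)|^{s}dy\big)^{1/s}\lesssim 2^{kn/s}\|\Omega\|_{L_{\infty}(\mathbb{R}^{n})\times L_{s}(S^{n-1})}$, whence the $k$‑th term is at most $C\|\Omega\|\,2^{k(\alpha-n)+kn/s}\big(\int_{B(x,2^{k+1})}|f|^{s'}\big)^{1/s'}$.

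Next I would fix a threshold $\rho=2^{k_{0}}>0$ and treat the two halves of the sum separately. For $k\le k_{0}$, bounding $\big(\int_{B(x,2^{k+1})}|f|^{s'}\big)^{1/s'}\lesssim 2^{kn/s'}M_{s'}f(x)$ with $M_{s'}f:=\big(M(|f|^{s'})\big)^{1/s'}$ and using $\tfrac1s+\tfrac1{s'}=1$, these terms sum to $\lesssim\|\Omega\|\,\rho^{\alpha}M_{s'}f(x)$ (convergent since $\alpha>0$). For $k>k_{0}$, using $s'\le p$ and H\"older with exponent $p/s'$ gives $\big(\int_{B(x,2^{k+1})}|f|^{s'}\big)^{1/s'}\lesssim 2^{kn(\frac1{s'}-\frac1p)}\|f\|_{L_{p}(\mathbb{R}^{n})}$, so these terms sum to $\lesssim\|\Omega\|\,\rho^{\alpha-\frac np}\|f\|_{L_{p}}$ (convergent since $p<\tfrac n\alpha$ forces $\alpha-\tfrac np<0$). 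Choosing $\rho$ to balance $\rho^{\alpha}M_{s'}f(x)$ against $\rho^{\alpha-\frac np}\|f\|_{L_{p}}$ yields
\[
|\overline{T}_{\Omega,\alpha}f(x)|\le C\,\|\Omega\|_{L_{\infty}(\mathbb{R}^{n})\times L_{s}(S^{n-1})}\,\|f\|_{L_{p}(\mathbb{R}^{n})}^{\alpha p/n}\,\big(M_{s'}f(x)\big)^{1-\alpha p/n}.
\]
Raising to the $q$‑th power, integrating, and using the identity $q\big(1-\tfrac{\alpha p}{n}\big)=p$ (which is precisely $\tfrac1q=\tfrac1p-\tfrac\alpha n$), the claim reduces to $\|M_{s'}f\|_{L_{p}(\mathbb{R}^{n})}\lesssim\|f\|_{L_{p}(\mathbb{R}^{n})}$, i.e. to the $L_{p/s'}$‑boundedness of the Hardy--Littlewood maximal operator.

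The main obstacle is the admissible range of exponents: the last reduction needs $p/s'>1$, so the argument above is clean only for $s'<p$. The endpoint $s'=p$ can be recovered by interpolating between two neighbouring non‑endpoint estimates. The alternative hypothesis $q<s$ is genuinely more delicate, because there $s'>p$ may occur and no auxiliary averaging exponent in $[s',p)$ is available; handling it is really the heart of Muckenhoupt--Wheeden's theorem and is where the assumption $s>\tfrac{n}{n-\alpha}$ does its work. One would exploit the angular $L_{s}$‑regularity more fully --- for instance via a Welland‑type splitting $|\overline{T}_{\Omega,\alpha}f(x)|\lesssim\big(M_{\Omega,\alpha-\varepsilon}f(x)\cdot M_{\Omega,\alpha+\varepsilon}f(x)\big)^{1/2}$ for small $\varepsilon$ combined with sharp $L_{p}\to L_{q_{\pm}}$ bounds (with $\tfrac1{q_{\pm}}=\tfrac1p-\tfrac{\alpha\pm\varepsilon}{n}$ and $\tfrac1{2q_{+}}+\tfrac1{2q_{-}}=\tfrac1q$) for the rough‑variable‑kernel fractional maximal operators, or a decomposition/duality argument tuned to $s$ --- and the bookkeeping required to keep every exponent admissible across all subcases is essentially all the work. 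For the present paper we simply invoke the result as stated in \cite{Muckenhoupt}; the computation above is meant to exhibit the mechanism in the principal case.
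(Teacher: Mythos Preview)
The paper does not prove this lemma at all: it is stated with a citation to Muckenhoupt--Wheeden and used as a black box, so there is no ``paper's own proof'' to compare against. Your sketch is therefore not a different route to the same argument but an independent justification of a quoted result, and you are explicitly aware of this in your last paragraph.

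On the substance: the Hedberg-type pointwise bound via dyadic annuli, H\"older with exponent $s$, polar coordinates to extract $\|\Omega\|_{L_\infty\times L_s}$, and optimisation over the splitting radius is correct and is the standard mechanism in the strict case $s'<p$; the variable-kernel aspect causes no trouble because the $x$-dependence is absorbed by the supremum in the mixed norm. Your identification of the case $q<s$ as genuinely different, and your Welland/duality remarks there, are accurate in spirit.

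One small gap: the sentence ``the endpoint $s'=p$ can be recovered by interpolating between two neighbouring non-endpoint estimates'' is not quite right as stated. With $s$ fixed you only have strong-type bounds for $p>s'$ (and a weak-type bound at $p=s'$, since $M_{s'}$ is only weak $(s',s')$); Marcinkiewicz interpolation between these gives strong type strictly between the endpoints, not at $p=s'$ itself. Recovering the closed endpoint requires an additional idea (e.g.\ exploiting $\Omega\in L_{s_1}$ for some $s_1>s$ if available, or the sharper weighted machinery of \cite{Muckenhoupt}), so if you want a self-contained proof of the full statement you should either restrict to $s'<p$ or supply that extra step explicitly. Since the paper only invokes the lemma, this does not affect anything downstream.
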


\begin{corollary}
Under the assumptions of Lemma \ref{Lemma1}, the operator $M_{\Omega,\alpha}$
from $L_{p}({\mathbb{R}^{n}})$ to $L_{q}({\mathbb{R}^{n}})$ is bounded .
Moreover, we have%
\[
\left \Vert M_{\Omega,\alpha}f\right \Vert _{L_{q}\left(  {\mathbb{R}^{n}%
}\right)  }\leq C\Vert \Omega \Vert_{L_{\infty}({\mathbb{R}^{n}})\times
L_{s}(S^{n-1})}\left \Vert f\right \Vert _{L_{p}\left(  {\mathbb{R}^{n}}\right)
}.
\]

\end{corollary}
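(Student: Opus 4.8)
The plan is to deduce the $L_{p}\to L_{q}$ boundedness of $M_{\Omega,\alpha}$ directly from Lemma \ref{Lemma1}, via a pointwise domination of the fractional maximal operator with rough variable kernel by the corresponding fractional integral operator.

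First I would record the elementary size comparison. Since $|B(x,t)|=v_{n}t^{n}$ and since every $y\in B(x,t)$ satisfies $|x-y|<t$, hence $|x-y|^{n-\alpha}<t^{n-\alpha}$ (recall $0<\alpha<n$), we have for each $t>0$
\begin{align*}
|B(x,t)|^{-1+\frac{\alpha}{n}}\int_{B(x,t)}|\Omega(x,x-y)|\,|f(y)|\,dy
&=v_{n}^{-1+\frac{\alpha}{n}}\,t^{-(n-\alpha)}\int_{B(x,t)}|\Omega(x,x-y)|\,|f(y)|\,dy\\
&\leq v_{n}^{-1+\frac{\alpha}{n}}\int_{B(x,t)}\frac{|\Omega(x,x-y)|}{|x-y|^{n-\alpha}}\,|f(y)|\,dy\\
&\leq v_{n}^{-1+\frac{\alpha}{n}}\int_{\mathbb{R}^{n}}\frac{|\Omega(x,x-y)|}{|x-y|^{n-\alpha}}\,|f(y)|\,dy.
\end{align*}
Taking the supremum over $t>0$ on the left yields the pointwise estimate
\[
M_{\Omega,\alpha}f(x)\leq v_{n}^{-1+\frac{\alpha}{n}}\,\overline{T}_{|\Omega|,\alpha}(|f|)(x),\qquad x\in\mathbb{R}^{n},
\]
where $\overline{T}_{|\Omega|,\alpha}(|f|)(x)=\int_{\mathbb{R}^{n}}\frac{|\Omega(x,x-y)|}{|x-y|^{n-\alpha}}\,|f(y)|\,dy\geq 0$.

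Next I would observe that $|\Omega|$ is again homogeneous of degree zero in its second variable and $\||\Omega|\|_{L_{\infty}(\mathbb{R}^{n})\times L_{s}(S^{n-1})}=\|\Omega\|_{L_{\infty}(\mathbb{R}^{n})\times L_{s}(S^{n-1})}$, while $\||f|\|_{L_{p}}=\|f\|_{L_{p}}$; hence all the hypotheses of Lemma \ref{Lemma1} hold for $\overline{T}_{|\Omega|,\alpha}$ applied to $|f|$. Combining that lemma with the pointwise bound above gives
\[
\|M_{\Omega,\alpha}f\|_{L_{q}(\mathbb{R}^{n})}\leq v_{n}^{-1+\frac{\alpha}{n}}\,\bigl\|\overline{T}_{|\Omega|,\alpha}(|f|)\bigr\|_{L_{q}(\mathbb{R}^{n})}\leq C\,\|\Omega\|_{L_{\infty}(\mathbb{R}^{n})\times L_{s}(S^{n-1})}\,\|f\|_{L_{p}(\mathbb{R}^{n})},
\]
which is exactly the assertion. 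The only point requiring a moment's care is that $\overline{T}_{\Omega,\alpha}$ as defined carries no absolute values whereas $M_{\Omega,\alpha}$ involves $|\Omega|$ and $|f|$; this is harmless, since replacing $\Omega$ by $|\Omega|$ and $f$ by $|f|$ preserves every hypothesis of Lemma \ref{Lemma1} and makes $\overline{T}_{|\Omega|,\alpha}(|f|)$ nonnegative, so that it equals its own absolute value. Accordingly there is no substantial obstacle: the corollary is a one-line consequence of the pointwise domination together with Lemma \ref{Lemma1}.
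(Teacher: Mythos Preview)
Your proof is correct and follows essentially the same route as the paper: both establish the pointwise domination $M_{\Omega,\alpha}f(x)\leq C_{n,\alpha}^{-1}\,\overline{T}_{|\Omega|,\alpha}(|f|)(x)$ (with the same constant $v_{n}^{-1+\alpha/n}=|B(0,1)|^{-(n-\alpha)/n}$) and then invoke Lemma~\ref{Lemma1} for the kernel $|\Omega|$ applied to $|f|$. Your write-up is slightly more explicit about why $|\Omega|$ and $|f|$ inherit the hypotheses, but the argument is identical in substance.
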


\begin{proof}
Set%
\[
\widetilde{T}_{\left \vert \Omega \right \vert ,\alpha}\left(  \left \vert
f\right \vert \right)  (x)=\int \limits_{{\mathbb{R}^{n}}}\frac{\left \vert
\Omega(x,x-y)\right \vert }{|x-y|^{n-\alpha}}\left \vert f(y)\right \vert
dy,\qquad0<\alpha<n,
\]
where $\Omega \in L_{\infty}({\mathbb{R}^{n}})\times L_{s}(S^{n-1})\left(
s>1\right)  $ is homogeneous of degree zero with respect to the second
variable $y$ on ${\mathbb{R}^{n}}$. It is easy to see that, $\widetilde
{T}_{\left \vert \Omega \right \vert ,\alpha}$ satisfies Lemma \ref{Lemma1}. On
the other hand, for any $t>0$, we have
\begin{align*}
\widetilde{T}_{\left \vert \Omega \right \vert ,\alpha}\left(  \left \vert
f\right \vert \right)  (x)  & \geq%
{\displaystyle \int \limits_{B\left(  x,t\right)  }}
\frac{\left \vert \Omega(x,x-y)\right \vert }{|x-y|^{n-\alpha}}\left \vert
f(y)\right \vert dy\\
& \geq \frac{1}{t^{n-\alpha}}%
{\displaystyle \int \limits_{B\left(  x,t\right)  }}
\left \vert \Omega(x,x-y)\right \vert \left \vert f(y)\right \vert dy.
\end{align*}
Then by taking supremum for $t>0$, we get%
\[
M_{\Omega,\alpha}f\left(  x\right)  \leq C_{n,\alpha}^{-1}\widetilde
{T}_{\left \vert \Omega \right \vert ,\alpha}\left(  \left \vert f\right \vert
\right)  (x)\qquad C_{n,\alpha}=\left \vert B\left(  0,1\right)  \right \vert
^{\frac{n-\alpha}{n}}.
\]

\end{proof}

On the other hand, commutators of linear operators take important roles in
harmonic analysis and related topics (see \cite{Chanillo, Coifman1, Coifman2,
Gurbuz, Gurbuz2, Gurbuz3, Janson, Palus, Shi}). There are two major reasons
for considering the problem of commutators. The first one is that the
boundedness of commutators can produce some characterizations of function
spaces (see \cite{BGGS, Chanillo, Gurbuz, Gurbuz1, Gurbuz2, Gurbuz3, Janson,
Palus, Shi}). The other one is that the theory of commutators plays an
important role in the study of the regularity of solutions to elliptic and
parabolic PDEs of the second order (see \cite{ChFraL1, ChFraL2, FazPalRag,
Softova}).

Let $b_{i}\left(  i=1,\ldots,m\right)  $ be locally integrable functions on
${\mathbb{R}^{n}}$, then the multilinear commutators generated by fractional
integral and maximal operator with rough variable kernel and $\overrightarrow
{b}=\left(  b_{1},\ldots,b_{m}\right)  $ are given as follows, respectively:%
\[
\lbrack \overrightarrow{b},\overline{T}_{\Omega,\alpha}]f\left(  x\right)  =%
{\displaystyle \int \limits_{{\mathbb{R}^{n}}}}
{\displaystyle \prod \limits_{i=1}^{m}}
\left[  b_{i}\left(  x\right)  -b_{i}\left(  y\right)  \right]  \frac
{\Omega(x,x-y)}{|x-y|^{n-\alpha}}f\left(  y\right)  dy,\qquad0<\alpha<n,
\]%
\[
M_{\Omega,\overrightarrow{b},\alpha}f\left(  x\right)  =\sup_{t>0}%
|B(x,t)|^{-1+\frac{\alpha}{n}}%
{\displaystyle \int \limits_{B(x,t)}}
{\displaystyle \prod \limits_{i=1}^{m}}
\left[  b_{i}\left(  x\right)  -b_{i}\left(  y\right)  \right]  \left \vert
\Omega(x,x-y)\right \vert |f(y)|dy,\qquad0<\alpha<n.
\]

For $m=1$, $[\overrightarrow{b},\overline{T}_{\Omega,\alpha}]$ and
$M_{\Omega,\overrightarrow{b},\alpha}$ are obviously the commutator operators
of $\overline{T}_{\Omega,\alpha}$ and $M_{\Omega,\alpha}$,%
\begin{align*}
\lbrack b,\overline{T}_{\Omega,\alpha}]f(x)  & \equiv b(x)\overline{T}%
_{\Omega,\alpha}f(x)-\overline{T}_{\Omega,\alpha}(bf)(x)\\
& =\int \limits_{{\mathbb{R}^{n}}}[b(x)-b(y)]\frac{\Omega(x,x-y)}%
{|x-y|^{n-\alpha}}f(y)dy,
\end{align*}
and
\begin{align*}
M_{\Omega,b,\alpha}\left(  f\right)  (x)  & \equiv b\left(  x\right)
M_{\Omega,\alpha}f\left(  x\right)  -M_{\Omega,\alpha}\left(  bf\right)
\left(  x\right) \\
& =\sup_{t>0}|B(x,t)|^{-1+\frac{\alpha}{n}}\int \limits_{B(x,t)}\left \vert
b\left(  x\right)  -b\left(  y\right)  \right \vert \left \vert \Omega
(x,x-y)\right \vert |f(y)|dy,
\end{align*}
where $0<\alpha<n$ and $f$ is a suitable function.

In \cite{Mo}, the authors obtain the boundedness for the multilinear
commutators generated by singular integral operators with rough variable
kernel and local Campanato functions on generalized local Morrey spaces.

Inspired by \cite{Mo}, in this paper we give local {Campanato space} estimates
for fractional type multilinear commutators with rough variable kernel on both
generalized local Morrey spaces and generalized vanishing local Morrey spaces,
respectively. But, the techniques and non-trivial estimates which have been
used in the proofs of our main results are quite different from \cite{Mo}. For
example, using inequality about the weighted Hardy operator $H_{w}$ in
\cite{Mo}, in this paper we will only use the following relationship between
essential supremum and essential infimum%
\begin{equation}
\left(  \operatorname*{essinf}\limits_{x\in E}f\left(  x\right)  \right)
^{-1}=\operatorname*{esssup}\limits_{x\in E}\frac{1}{f\left(  x\right)
},\label{5}%
\end{equation}
where $f$ is any real-valued nonnegative function and measurable on $E$ (see
\cite{Wheeden-Zygmund}, page 143).

We first need some lemmas (our main lemmas) which are used in the proof of the
main results. These lemmas with their proofs can be formulated as follows, respectively:

\begin{lemma}
\label{lemma1}Suppose that $x_{0}\in{\mathbb{R}^{n}}$, $\Omega \in L_{\infty
}({\mathbb{R}^{n}})\times L_{s}(S^{n-1})$, $s>1$, is homogeneous of degree
zero with respect to the second variable $y$ on ${\mathbb{R}^{n}}$. Let
$0<\alpha<n$, $1<p<\frac{n}{\alpha}$, $\frac{1}{q}=\frac{1}{p}-\frac{\alpha
}{n}$. Let $T_{\Omega,\alpha}$ be a sublinear operator satisfying condition
(\ref{e1}), bounded from $L_{p}({\mathbb{R}^{n}})$ to $L_{q}({\mathbb{R}^{n}%
})$.

If $p>1$ and $s^{\prime}\leq p$, then the inequality
\begin{equation}
\left \Vert T_{\Omega,\alpha}f\right \Vert _{L_{q}\left(  B\left(
x_{0},r\right)  \right)  }\lesssim r^{\frac{n}{q}}\int \limits_{2r}^{\infty
}t^{-\frac{n}{q}-1}\left \Vert f\right \Vert _{L_{p}\left(  B\left(
x_{0},t\right)  \right)  }dt\label{40}%
\end{equation}
holds for any ball $B\left(  x_{0},r\right)  $ and for all $f\in L_{p}%
^{loc}\left(  {\mathbb{R}^{n}}\right)  $.

If $p>1$ and $q<s$, then the inequality%
\[
\left \Vert T_{\Omega,\alpha}f\right \Vert _{L_{q}\left(  B\left(
x_{0},r\right)  \right)  }\lesssim r^{\frac{n}{q}-\frac{n}{s}}\int
\limits_{2r}^{\infty}t^{\frac{n}{s}-\frac{n}{q}-1}\left \Vert f\right \Vert
_{L_{p}\left(  B\left(  x_{0},t\right)  \right)  }dt
\]
holds for any ball $B\left(  x_{0},r\right)  $ and for all $f\in L_{p}%
^{loc}\left(  {\mathbb{R}^{n}}\right)  $.
\end{lemma}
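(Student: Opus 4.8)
The plan is to split $f$ into a local part and a far part relative to the ball $B=B(x_0,r)$: write $f=f_1+f_2$ with $f_1=f\chi_{2B}$ and $f_2=f\chi_{(2B)^C}$, where $2B=B(x_0,2r)$. For the local part, I would use the $L_p\to L_q$ boundedness of $T_{\Omega,\alpha}$ granted in the hypothesis to get
\[
\left\Vert T_{\Omega,\alpha}f_1\right\Vert_{L_q(B)}\le\left\Vert T_{\Omega,\alpha}f_1\right\Vert_{L_q({\mathbb{R}^n})}\lesssim\left\Vert f\right\Vert_{L_p(2B)}.
\]
Then I would reconcile this with the right-hand side of \eqref{40} by the elementary observation that $\left\Vert f\right\Vert_{L_p(2B)}\le\left\Vert f\right\Vert_{L_p(B(x_0,t))}$ for $t\ge2r$, so that
\[
\left\Vert f\right\Vert_{L_p(2B)}\lesssim r^{\frac{n}{q}}\left\Vert f\right\Vert_{L_p(2B)}\int\limits_{2r}^{\infty}t^{-\frac{n}{q}-1}\,dt\lesssim r^{\frac{n}{q}}\int\limits_{2r}^{\infty}t^{-\frac{n}{q}-1}\left\Vert f\right\Vert_{L_p(B(x_0,t))}\,dt,
\]
which absorbs the local term into the desired bound.

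For the far part, the key point is that for $x\in B$ and $y\in(2B)^C$ one has $|x-y|\approx|x_0-y|$, so the size condition \eqref{e1} gives
\[
\left|T_{\Omega,\alpha}f_2(x)\right|\lesssim\int\limits_{(2B)^C}\frac{|\Omega(x,x-y)|}{|x_0-y|^{n-\alpha}}\,|f(y)|\,dy.
\]
Decomposing $(2B)^C=\bigcup_{j\ge1}\left(B(x_0,2^{j+1}r)\setminus B(x_0,2^{j}r)\right)$, on each annulus $|x_0-y|\approx2^{j}r$ and I would estimate the inner integral by Hölder's inequality in the $y$ variable with exponents adapted to $\Omega\in L_s(S^{n-1})$. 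Precisely, on the annulus of radius $\sim2^{j}r$ I would apply Hölder with exponent $s'$ on $|\Omega(x,x-y)|$ (using polar coordinates and the uniform-in-$x$ bound $\Vert\Omega\Vert_{L_\infty\times L_s}$) and with exponent $p$ on $|f(y)|$; under the assumption $s'\le p$ the remaining exponent works out so that the annular contribution is controlled by
\[
(2^{j}r)^{\alpha-\frac{n}{q}}\left\Vert f\right\Vert_{L_p(B(x_0,2^{j+1}r))}.
\]
Summing over $j$ and recognizing the sum as a Riemann-sum lower bound for $\int_{2r}^{\infty}t^{-\frac{n}{q}-1}\left\Vert f\right\Vert_{L_p(B(x_0,t))}\,dt$ (using that $t\mapsto\left\Vert f\right\Vert_{L_p(B(x_0,t))}$ is nondecreasing and that $\alpha-\frac{n}{q}=-\frac{n}{p}$... more precisely $-\frac{n}{q}-1+n/(\text{dual exponent})$ collapses correctly via $\tfrac1q=\tfrac1p-\tfrac{\alpha}{n}$), and then taking the $L_q(B)$ norm in $x$ — which only contributes the factor $|B|^{1/q}\approx r^{n/q}$ since the bound on $|T_{\Omega,\alpha}f_2(x)|$ is essentially independent of $x\in B$ — yields the claimed inequality \eqref{40}.

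The second inequality, under $q<s$, is proved identically except that in the annular Hölder estimate one no longer has $s'\le p$, so one instead keeps the $L_s$ norm on $\Omega$ explicitly and pairs it with $L_p$ on $f$ via a three-term Hölder inequality with exponents $s$, $p$, and the conjugate; the leftover power of the annulus radius then comes out as $(2^jr)^{\frac{n}{s}-\frac{n}{q}}$ rather than a pure $r^{n/q}$ prefactor, which is exactly the source of the extra $r^{-n/s}$ and the modified weight $t^{n/s}$ in the stated estimate. I expect the main obstacle to be bookkeeping the Hölder exponents in the annular estimates so that the residual powers of $2^jr$ assemble cleanly into the integral $\int_{2r}^\infty$; the role of the hypothesis $1<p<\frac{n}{\alpha}$ together with $\frac1q=\frac1p-\frac{\alpha}{n}$ is precisely to guarantee the exponent arithmetic closes and the resulting geometric series in $j$ converges, and one must check the two regimes $s'\le p$ and $q<s$ separately because they force genuinely different applications of Hölder's inequality.
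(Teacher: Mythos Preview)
Your proposal is correct and follows essentially the same approach as the paper: the same split $f=f_1+f_2$ with $f_1=f\chi_{2B}$, the same use of the assumed $L_p\to L_q$ boundedness for $f_1$ together with the absorption trick $\Vert f\Vert_{L_p(2B)}\approx r^{n/q}\Vert f\Vert_{L_p(2B)}\int_{2r}^\infty t^{-n/q-1}\,dt\le r^{n/q}\int_{2r}^\infty t^{-n/q-1}\Vert f\Vert_{L_p(B(x_0,t))}\,dt$, and the same three-exponent H\"older estimate (in $s$, $p$, and the residual) on $f_2$ after invoking $|x-y|\approx|x_0-y|$. The only cosmetic difference is that where you decompose $(2B)^C$ into dyadic annuli and sum, the paper instead applies Fubini via the identity $|x_0-y|^{\alpha-n}\approx\int_{|x_0-y|}^\infty t^{\alpha-n-1}\,dt$ to pass directly to the continuous integral $\int_{2r}^\infty$, which spares the sum-to-integral conversion but is otherwise the same computation.
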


\begin{proof}
Let $0<\alpha<n$, $1\leq s^{\prime}<p<\frac{n}{\alpha}$ and $\frac{1}{q}%
=\frac{1}{p}-\frac{\alpha}{n}$. Set $B=B\left(  x_{0},r\right)  $ for the ball
centered at $x_{0}$ and of radius $r$ and $2B=B\left(  x_{0},2r\right)  $. We
represent $f$ as%
\begin{equation}
f=f_{1}+f_{2},\qquad \text{\ }f_{1}\left(  y\right)  =f\left(  y\right)
\chi_{2B}\left(  y\right)  ,\qquad \text{\ }f_{2}\left(  y\right)  =f\left(
y\right)  \chi_{\left(  2B\right)  ^{C}}\left(  y\right)  ,\qquad
r>0\label{e39}%
\end{equation}
and have%
\[
\left \Vert T_{\Omega,\alpha}f\right \Vert _{L_{q}\left(  B\right)  }%
\leq \left \Vert T_{\Omega,\alpha}f_{1}\right \Vert _{L_{q}\left(  B\right)
}+\left \Vert T_{\Omega,\alpha}f_{2}\right \Vert _{L_{q}\left(  B\right)  }.
\]

Since $f_{1}\in L_{p}\left(  \mathbb{R}^{n}\right)  $, $T_{\Omega,\alpha}%
f_{1}\in L_{q}\left(  \mathbb{R}^{n}\right)  $ and by the boundedness of
$T_{\Omega,\alpha}$ from $L_{p}({\mathbb{R}^{n}})$ to $L_{q}({\mathbb{R}^{n}%
})$ (see Lemma \ref{Lemma1}) it follows that:%
\[
\left \Vert T_{\Omega,\alpha}f_{1}\right \Vert _{L_{q}\left(  B\right)  }%
\leq \left \Vert T_{\Omega,\alpha}f_{1}\right \Vert _{L_{q}\left(
\mathbb{R}
^{n}\right)  }\leq C\left \Vert f_{1}\right \Vert _{L_{p}\left(
\mathbb{R}
^{n}\right)  }=C\left \Vert f\right \Vert _{L_{p}\left(  2B\right)  },
\]
where constant $C>0$ is independent of $f$.

It is clear that $x\in B$, $y\in \left(  2B\right)  ^{C}$ implies \ $\frac
{1}{2}\left \vert x_{0}-y\right \vert \leq \left \vert x-y\right \vert \leq \frac
{3}{2}\left \vert x_{0}-y\right \vert $. Then we get%
\[
\left \vert T_{\Omega,\alpha}f_{2}\left(  x\right)  \right \vert \leq
2^{n-\alpha}c_{1}\int \limits_{\left(  2B\right)  ^{C}}\frac{\left \vert
f\left(  y\right)  \right \vert \left \vert \Omega(x,\text{ }x-y)\right \vert
}{\left \vert x_{0}-y\right \vert ^{n-\alpha}}dy.
\]
By Fubini's theorem, we have%
\begin{align*}
\int \limits_{\left(  2B\right)  ^{C}}\frac{\left \vert f\left(  y\right)
\right \vert \left \vert \Omega(x,x-y)\right \vert }{\left \vert x_{0}%
-y\right \vert ^{n-\alpha}}dy  & \approx \int \limits_{\left(  2B\right)  ^{C}%
}\left \vert f\left(  y\right)  \right \vert \left \vert \Omega(x,x-y)\right \vert
\int \limits_{\left \vert x_{0}-y\right \vert }^{\infty}\frac{dt}{t^{n+1-\alpha}%
}dy\\
& \approx \int \limits_{2r}^{\infty}\int \limits_{2r\leq \left \vert x_{0}%
-y\right \vert \leq t}\left \vert f\left(  y\right)  \right \vert \left \vert
\Omega(x,x-y)\right \vert dy\frac{dt}{t^{n+1-\alpha}}\\
& \lesssim \int \limits_{2r}^{\infty}\int \limits_{B\left(  x_{0},t\right)
}\left \vert f\left(  y\right)  \right \vert \left \vert \Omega(x,x-y)\right \vert
dy\frac{dt}{t^{n+1-\alpha}}.
\end{align*}

Applying H\"{o}lder's inequality, we get%
\begin{align}
& \int \limits_{\left(  2B\right)  ^{C}}\frac{\left \vert f\left(  y\right)
\right \vert \left \vert \Omega(x,x-y)\right \vert }{\left \vert x_{0}%
-y\right \vert ^{n-\alpha}}dy\nonumber \\
& \lesssim \int \limits_{2r}^{\infty}\left \Vert f\right \Vert _{L_{p}\left(
B\left(  x_{0},t\right)  \right)  }\left \Vert \Omega \left(  x,x-\cdot \right)
\right \Vert _{L_{s}\left(  B\left(  x_{0},t\right)  \right)  }\left \vert
B\left(  x_{0},t\right)  \right \vert ^{1-\frac{1}{p}-\frac{1}{s}}\frac
{dt}{t^{n+1-\alpha}}.\label{e310}%
\end{align}
For $x\in B\left(  x_{0},t\right)  $, notice that $\Omega$ is homogeneous of
degree zero with respect to the second variable $y$ on ${\mathbb{R}^{n}}$ and
$\Omega \in L_{\infty}({\mathbb{R}^{n}})\times L_{s}(S^{n-1})$, $s>1$. Then, we
obtain%
\begin{align}
\left(  \int \limits_{B\left(  x_{0},t\right)  }\left \vert \Omega
(x,x-y)\right \vert ^{s}dy\right)  ^{\frac{1}{s}}  & =\left(  \int
\limits_{B\left(  x-x_{0},t\right)  }\left \vert \Omega \left(  x,z\right)
\right \vert ^{s}dz\right)  ^{\frac{1}{s}}\nonumber \\
& \leq \left(  \int \limits_{B\left(  0,t+\left \vert x-x_{0}\right \vert \right)
}\left \vert \Omega \left(  x,z\right)  \right \vert ^{s}dz\right)  ^{\frac{1}%
{s}}\nonumber \\
& \leq \left(  \int \limits_{B\left(  0,2t\right)  }\left \vert \Omega \left(
x,z\right)  \right \vert ^{s}dz\right)  ^{\frac{1}{s}}\nonumber \\
& =\left(  \int \limits_{S^{n-1}}\int \limits_{0}^{2t}\left \vert \Omega \left(
x,z^{\prime}\right)  \right \vert ^{s}d\sigma \left(  z^{\prime}\right)
r^{n-1}dr\right)  ^{\frac{1}{s}}\nonumber \\
& =C\left \Vert \Omega \right \Vert _{L_{\infty}({\mathbb{R}^{n}})\times
L_{s}(S^{n-1})}\left \vert B\left(  x_{0},2t\right)  \right \vert ^{\frac{1}{s}%
}.\label{e311}%
\end{align}
Thus, by (\ref{e311}), it follows that:%
\[
\left \vert T_{\Omega,\alpha}f_{2}\left(  x\right)  \right \vert \lesssim
\int \limits_{2r}^{\infty}\left \Vert f\right \Vert _{L_{p}\left(  B\left(
x_{0},t\right)  \right)  }\frac{dt}{t^{\frac{n}{q}+1}}.
\]

Moreover, for all $p\in \left(  1,\infty \right)  $ the inequality%
\begin{equation}
\left \Vert T_{\Omega,\alpha}f_{2}\right \Vert _{L_{q}\left(  B\right)
}\lesssim r^{\frac{n}{q}}\int \limits_{2r}^{\infty}\left \Vert f\right \Vert
_{L_{p}\left(  B\left(  x_{0},t\right)  \right)  }\frac{dt}{t^{\frac{n}{q}+1}%
}\label{10}%
\end{equation}

is valid. Thus, we obtain%
\[
\left \Vert T_{\Omega,\alpha}f\right \Vert _{L_{q}\left(  B\right)  }%
\lesssim \left \Vert f\right \Vert _{L_{p}\left(  2B\right)  }+r^{\frac{n}{q}%
}\int \limits_{2r}^{\infty}\left \Vert f\right \Vert _{L_{p}\left(  B\left(
x_{0},t\right)  \right)  }\frac{dt}{t^{\frac{n}{q}+1}}.
\]

On the other hand, we have%
\begin{align}
\left \Vert f\right \Vert _{L_{p}\left(  2B\right)  }  & \approx r^{\frac{n}{q}%
}\left \Vert f\right \Vert _{L_{p}\left(  2B\right)  }\int \limits_{2r}^{\infty
}\frac{dt}{t^{\frac{n}{q}+1}}\nonumber \\
& \leq r^{\frac{n}{q}}\int \limits_{2r}^{\infty}\left \Vert f\right \Vert
_{L_{p}\left(  B\left(  x_{0},t\right)  \right)  }\frac{dt}{t^{\frac{n}{q}+1}%
}.\label{e313}%
\end{align}
By combining the above inequalities, we obtain%
\[
\left \Vert T_{\Omega,\alpha}f\right \Vert _{L_{q}\left(  B\right)  }\lesssim
r^{\frac{n}{q}}\int \limits_{2r}^{\infty}t^{-\frac{n}{q}-1}\left \Vert
f\right \Vert _{L_{p}\left(  B\left(  x_{0},t\right)  \right)  }dt.
\]
For the case of $1<q<s$, we can also use the same method, so we omit the
details. This completes the proof of Lemma \ref{lemma1}.
\end{proof}

\begin{lemma}
\label{lemma2}Suppose that $x_{0}\in{\mathbb{R}^{n}}$, $\Omega \in L_{\infty
}({\mathbb{R}^{n}})\times L_{s}(S^{n-1})$, $s>1$, is homogeneous of degree
zero with respect to the second variable $y$ on ${\mathbb{R}^{n}}$. Let
$T_{\Omega,\alpha}$ be a linear operator satisfying condition (\ref{e1}). Let
also $0<\alpha<n$ and $1<q,q_{1},p_{i},p<\frac{n}{\alpha}$ with $\frac{1}%
{q}=\sum \limits_{i=1}^{m}\frac{1}{p_{i}}+\frac{1}{p}$, $\frac{1}{q_{1}}%
=\frac{1}{q}-\frac{\alpha}{n}$ and $\overrightarrow{b}\in LC_{p_{i}%
,\lambda_{i}}^{\left \{  x_{0}\right \}  }({\mathbb{R}^{n}})$ for $0\leq
\lambda_{i}<\frac{1}{n}$, $i=1,\ldots,m$. Then, for $s^{\prime}\leq q$ the
inequality
\begin{equation}
\Vert \lbrack \overrightarrow{b},T_{\Omega,\alpha}]f\Vert_{L_{q_{1}}%
(B(x_{0},r))}\lesssim%
{\displaystyle \prod \limits_{i=1}^{m}}
\Vert \overrightarrow{b}\Vert_{LC_{p_{i},\lambda_{i}}^{\left \{  x_{0}\right \}
}}r^{\frac{n}{q_{1}}}%
{\displaystyle \int \limits_{2r}^{\infty}}
\left(  1+\ln \frac{t}{r}\right)  ^{m}t^{n\left(  -\frac{1}{q_{1}}+\left(
{\displaystyle \sum \limits_{i=1}^{m}}
\lambda_{i}+%
{\displaystyle \sum \limits_{i=1}^{m}}
\frac{1}{p_{i}}\right)  \right)  -1}\Vert f\Vert_{L_{p}(B(x_{0},t))}%
dt\label{200}%
\end{equation}
holds for any ball $B(x_{0},r)$ and for all $f\in L_{p}^{loc}({\mathbb{R}^{n}%
})$. Also, for $q_{1}<s$ the inequality%
\[
\Vert \lbrack \overrightarrow{b},T_{\Omega,\alpha}]f\Vert_{L_{q_{1}}%
(B(x_{0},r))}\lesssim%
{\displaystyle \prod \limits_{i=1}^{m}}
\Vert \overrightarrow{b}\Vert_{LC_{p_{i},\lambda_{i}}^{\left \{  x_{0}\right \}
}}\,r^{\frac{n}{q_{1}}-\frac{n}{s}}%
{\displaystyle \int \limits_{2r}^{\infty}}
\left(  1+\ln \frac{t}{r}\right)  ^{m}t^{n\left(  -\frac{1}{q_{1}}+\left(
\frac{1}{s}+%
{\displaystyle \sum \limits_{i=1}^{m}}
\lambda_{i}+%
{\displaystyle \sum \limits_{i=1}^{m}}
\frac{1}{p_{i}}\right)  \right)  -1}\Vert f\Vert_{L_{p}(B(x_{0},t))}dt
\]
holds for any ball $B(x_{0},r)$ and for all $f\in L_{p}^{loc}({\mathbb{R}^{n}%
})$.
\end{lemma}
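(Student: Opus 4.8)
The plan is to adapt the argument of Lemma \ref{lemma1}, inserting the usual mean-value splitting for the $m$ symbols. Fix $B=B(x_{0},r)$, put $2B=B(x_{0},2r)$, and decompose $f=f_{1}+f_{2}$ with $f_{1}=f\chi_{2B}$, $f_{2}=f\chi_{(2B)^{C}}$, exactly as in (\ref{e39}). For $S\subseteq\{1,\dots,m\}$, writing $(b_{i})_{B}=b_{i,B(x_{0},r)}$ and $(b_{i})_{2B}=b_{i,B(x_{0},2r)}$, I will use the identity
\[
\prod_{i=1}^{m}\bigl(b_{i}(x)-b_{i}(y)\bigr)=\sum_{S\subseteq\{1,\dots,m\}}(-1)^{m-|S|}\prod_{i\in S}\bigl(b_{i}(x)-c_{i}\bigr)\prod_{j\notin S}\bigl(b_{j}(y)-c_{j}\bigr),
\]
with $c_{i}=(b_{i})_{2B}$ when treating $f_{1}$ and $c_{i}=(b_{i})_{B}$ when treating $f_{2}$. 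This bounds $\Vert[\overrightarrow{b},T_{\Omega,\alpha}]f\Vert_{L_{q_{1}}(B)}$ by a sum of $2^{m}$ "local" pieces $U_{S}$ (from $f_{1}$) and $2^{m}$ "far" pieces $V_{S}$ (from $f_{2}$); since $m$ is fixed, I may treat the pieces one at a time.

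\emph{Local pieces.} The piece $U_{S}$ equals $\prod_{i\in S}\bigl(b_{i}(\cdot)-(b_{i})_{2B}\bigr)\cdot T_{\Omega,\alpha}(g_{S})(\cdot)$, where $g_{S}=\prod_{j\notin S}\bigl(b_{j}-(b_{j})_{2B}\bigr)f_{1}$ is supported in $2B$. By H\"older, $g_{S}\in L_{\rho}({\mathbb{R}^{n}})$ with $\frac{1}{\rho}=\frac{1}{p}+\sum_{j\notin S}\frac{1}{p_{j}}$, and one checks $1<\rho<\frac{n}{\alpha}$ and (in the case $s^{\prime}\leq q$) $s^{\prime}\leq q\leq\rho$, so by the $L_{\rho}\!\to\!L_{\sigma}$ boundedness of $T_{\Omega,\alpha}$ with $\frac{1}{\sigma}=\frac{1}{\rho}-\frac{\alpha}{n}$ (Lemma \ref{Lemma1}), $\Vert T_{\Omega,\alpha}g_{S}\Vert_{L_{\sigma}({\mathbb{R}^{n}})}\lesssim\Vert g_{S}\Vert_{L_{\rho}({\mathbb{R}^{n}})}$. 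A further H\"older on $B$ with $\frac{1}{q_{1}}=\sum_{i\in S}\frac{1}{p_{i}}+\frac{1}{\sigma}$ (which follows from the parameter relations), together with the bounds for $\Vert b_{i}-(b_{i})_{2B}\Vert_{L_{p_{i}}(B)}$ and $\Vert b_{j}-(b_{j})_{2B}\Vert_{L_{p_{j}}(2B)}$ given by Lemma \ref{Lemma 4}, yields
\[
\Vert U_{S}\Vert_{L_{q_{1}}(B)}\lesssim\prod_{i=1}^{m}\Vert\overrightarrow{b}\Vert_{LC_{p_{i},\lambda_{i}}^{\{x_{0}\}}}\;r^{\,n\sum_{i=1}^{m}\left(\frac{1}{p_{i}}+\lambda_{i}\right)}\Vert f\Vert_{L_{p}(2B)}.
\]
To recast this in the form of (\ref{200}) I use $\Vert f\Vert_{L_{p}(2B)}\leq\Vert f\Vert_{L_{p}(B(x_{0},t))}$ for $t\geq2r$ together with the elementary equivalence $r^{\,n\sum_{i}\left(\frac{1}{p_{i}}+\lambda_{i}\right)}\approx r^{\frac{n}{q_{1}}}\int_{2r}^{\infty}t^{\,n\left(-\frac{1}{q_{1}}+\sum_{i}\lambda_{i}+\sum_{i}\frac{1}{p_{i}}\right)-1}\,dt$, valid whenever the exponent is negative; if it is not, the right-hand side of (\ref{200}) is infinite and there is nothing to prove.

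\emph{Far pieces.} For $x\in B$ and $y\in(2B)^{C}$ we have $\frac{1}{2}|x_{0}-y|\leq|x-y|\leq\frac{3}{2}|x_{0}-y|$, so (\ref{e1}) gives $|V_{S}(x)|\lesssim\prod_{i\in S}\bigl|b_{i}(x)-(b_{i})_{B}\bigr|\cdot J_{S}(x)$ with $J_{S}(x)=\int_{(2B)^{C}}\prod_{j\notin S}\bigl|b_{j}(y)-(b_{j})_{B}\bigr|\,\frac{|\Omega(x,x-y)|}{|x_{0}-y|^{\,n-\alpha}}|f(y)|\,dy$. I estimate $J_{S}(x)$ uniformly for $x\in B$: replacing $|x_{0}-y|^{-(n-\alpha)}$ by $\int_{|x_{0}-y|}^{\infty}t^{\alpha-n-1}\,dt$ and applying Fubini as in the proof of Lemma \ref{lemma1}, then H\"older on $B(x_{0},t)$ with exponents $s$ for $\Omega(x,x-\cdot)$, $p$ for $f$, $p_{j}\ (j\notin S)$ for $b_{j}-(b_{j})_{B}$ and the residual exponent for the constant $1$, and using $\Vert\Omega(x,x-\cdot)\Vert_{L_{s}(B(x_{0},t))}\lesssim\Vert\Omega\Vert_{L_{\infty}({\mathbb{R}^{n}})\times L_{s}(S^{n-1})}\,t^{n/s}$ from (\ref{e311}) together with $\Vert b_{j}-(b_{j})_{B}\Vert_{L_{p_{j}}(B(x_{0},t))}\lesssim\bigl(1+\ln\tfrac{t}{r}\bigr)t^{\,n\left(\frac{1}{p_{j}}+\lambda_{j}\right)}\Vert\overrightarrow{b}\Vert_{LC_{p_{j},\lambda_{j}}^{\{x_{0}\}}}$ from (\ref{a}), a bookkeeping of the powers of $t$ produces
\[
J_{S}(x)\lesssim\prod_{j\notin S}\Vert\overrightarrow{b}\Vert_{LC_{p_{j},\lambda_{j}}^{\{x_{0}\}}}\int_{2r}^{\infty}\Bigl(1+\ln\tfrac{t}{r}\Bigr)^{m-|S|}t^{\,n\left(-\frac{1}{q_{1}}+\sum_{i=1}^{m}\frac{1}{p_{i}}+\sum_{j\notin S}\lambda_{j}\right)-1}\Vert f\Vert_{L_{p}(B(x_{0},t))}\,dt=:A_{S},
\]
and this bound does not depend on $x\in B$ — the uniformity is crucial and rests on the homogeneity of $\Omega$ in its second variable, which is what makes (\ref{e311}) uniform in $x$. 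Now I apply H\"older in $L_{q_{1}}(B)$ with an intermediate exponent $\widetilde{q}$, $\frac{1}{q_{1}}=\sum_{i\in S}\frac{1}{p_{i}}+\frac{1}{\widetilde{q}}$, bound $\Vert b_{i}-(b_{i})_{B}\Vert_{L_{p_{i}}(B)}\lesssim r^{\,n\left(\frac{1}{p_{i}}+\lambda_{i}\right)}\Vert\overrightarrow{b}\Vert_{LC_{p_{i},\lambda_{i}}^{\{x_{0}\}}}$ by (\ref{c}) and $\Vert J_{S}\Vert_{L_{\widetilde{q}}(B)}\leq|B|^{1/\widetilde{q}}\sup_{x\in B}|J_{S}(x)|\lesssim r^{n/\widetilde{q}}A_{S}$, and note $\sum_{i\in S}n\left(\frac{1}{p_{i}}+\lambda_{i}\right)+\frac{n}{\widetilde{q}}=\frac{n}{q_{1}}+n\sum_{i\in S}\lambda_{i}$. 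Finally, as $t\geq2r\geq r$, one has $r^{\,n\sum_{i\in S}\lambda_{i}}\leq t^{\,n\sum_{i\in S}\lambda_{i}}$ and $\bigl(1+\ln\tfrac{t}{r}\bigr)^{m-|S|}\leq\bigl(1+\ln\tfrac{t}{r}\bigr)^{m}$, and since $\sum_{i\in S}\lambda_{i}+\sum_{j\notin S}\lambda_{j}=\sum_{i=1}^{m}\lambda_{i}$, each $V_{S}$ is dominated by the right-hand side of (\ref{200}). Summing over $S$ and combining with the local estimate proves (\ref{200}).

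The second inequality, for $q_{1}<s$, follows by the same scheme, the only change being that in the H\"older step on $B(x_{0},t)$ one distributes the exponents as in the $q<s$ part of the proof of Lemma \ref{lemma1}; this is responsible for the extra factors $r^{-n/s}$ and $t^{n/s}$ appearing there, and I omit the details as was done in that proof. I expect the main obstacle to be the far part: arranging the estimate of $J_{S}(x)$ so that it is genuinely uniform in $x\in B$, and correctly splitting the total Campanato exponent $\sum_{i}\lambda_{i}$ into the part $\sum_{i\in S}\lambda_{i}$ carried by the outer factors $\prod_{i\in S}(b_{i}(x)-(b_{i})_{B})$ — which is absorbed into a power of $t$ via $t\geq r$ — and the part $\sum_{j\notin S}\lambda_{j}$ carried by the factors $b_{j}-(b_{j})_{B}$ inside the kernel integral, all while running the parallel bookkeeping of the $\frac{1}{p_{i}}$-exponents through the two H\"older applications.
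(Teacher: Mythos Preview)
Your proof is correct and follows essentially the same strategy as the paper: split $f=f_{1}+f_{2}$, expand the multilinear commutator via the mean-value identity, treat the local pieces by H\"older plus the $L_{\rho}\!\to\!L_{\sigma}$ boundedness of $T_{\Omega,\alpha}$, and treat the far pieces by the Fubini/H\"older scheme of Lemma~\ref{lemma1} combined with the Campanato estimates of Lemma~\ref{Lemma 4}. The only differences are cosmetic---you run the argument for general $m$ while the paper writes out $m=2$, and you invoke (\ref{a}) directly to bound $\Vert b_{j}-(b_{j})_{B}\Vert_{L_{p_{j}}(B(x_{0},t))}$, thereby bypassing the paper's further splitting of the $G_{4}$ term into $G_{41},\dots,G_{44}$.
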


\begin{proof}
Without loss of generality, it is sufficient to show that the conclusion holds
for $[\overrightarrow{b},T_{\Omega,\alpha}]f=[\left(  b_{1},b_{2}\right)
,T_{\Omega,\alpha}]f$. As in the proof of Lemma \ref{lemma1}, we represent $f$
in form (\ref{e39}) and thus have%
\[
\left \Vert \lbrack \left(  b_{1},b_{2}\right)  ,T_{\Omega,\alpha}]f\right \Vert
_{L_{q_{1}}\left(  B\right)  }\leq \left \Vert \lbrack \left(  b_{1}%
,b_{2}\right)  ,T_{\Omega,\alpha}]f_{1}\right \Vert _{L_{q_{1}}\left(
B\right)  }+\left \Vert [\left(  b_{1},b_{2}\right)  ,T_{\Omega,\alpha}%
]f_{2}\right \Vert _{L_{q_{1}}\left(  B\right)  }=:F+G.
\]

Let us estimate $F+G$, respectively.

For $[\left(  b_{1},b_{2}\right)  ,T_{\Omega,\alpha}]f_{1}\left(  x\right)  $,
we have the following decomposition,%
\begin{align*}
\lbrack \left(  b_{1},b_{2}\right)  ,T_{\Omega,\alpha}]f_{1}\left(  x\right)
& =\left(  b_{1}\left(  x\right)  -\left(  b_{1}\right)  _{B}\right)  \left(
b_{2}\left(  x\right)  -\left(  b_{2}\right)  _{B}\right)  T_{\Omega,\alpha
}f_{1}\left(  x\right) \\
& -\left(  b_{1}\left(  \cdot \right)  -\left(  b_{1}\right)  _{B}\right)
T_{\Omega,\alpha}\left(  \left(  b_{2}\left(  \cdot \right)  -\left(
b_{2}\right)  _{B}\right)  f_{1}\right)  \left(  x\right) \\
& +\left(  b_{2}\left(  x\right)  -\left(  b_{2}\right)  _{B}\right)
T_{\Omega,\alpha}\left(  \left(  b_{1}\left(  x\right)  -\left(  b_{1}\right)
_{B}\right)  f_{1}\right)  \left(  x\right)  -\\
& T_{\Omega,\alpha}\left(  \left(  b_{1}\left(  \cdot \right)  -\left(
b_{1}\right)  _{B}\right)  \left(  b_{2}\left(  \cdot \right)  -\left(
b_{2}\right)  _{B}\right)  f_{1}\right)  \left(  x\right)  .
\end{align*}
Hence, we get%
\begin{align}
F  & =\left \Vert [\left(  b_{1},b_{2}\right)  ,T_{\Omega,\alpha}%
]f_{1}\right \Vert _{L_{q_{1}}\left(  B\right)  }\lesssim \nonumber \\
& \left \Vert \left(  b_{1}-\left(  b_{1}\right)  _{B}\right)  \left(
b_{2}\left(  x\right)  -\left(  b_{2}\right)  _{B}\right)  T_{\Omega,\alpha
}f_{1}\right \Vert _{L_{q_{1}}\left(  B\right)  }\nonumber \\
& +\left \Vert \left(  b_{1}-\left(  b_{1}\right)  _{B}\right)  T_{\Omega
,\alpha}\left(  \left(  b_{2}-\left(  b_{2}\right)  _{B}\right)  f_{1}\right)
\right \Vert _{L_{q_{1}}\left(  B\right)  }\nonumber \\
& +\left \Vert \left(  b_{2}-\left(  b_{2}\right)  _{B}\right)  T_{\Omega
,\alpha}\left(  \left(  b_{1}-\left(  b_{1}\right)  _{B}\right)  f_{1}\right)
\right \Vert _{L_{q_{1}}\left(  B\right)  }\nonumber \\
& +\left \Vert T_{\Omega,\alpha}\left(  \left(  b_{1}-\left(  b_{1}\right)
_{B}\right)  \left(  b_{2}-\left(  b_{2}\right)  _{B}\right)  f_{1}\right)
\right \Vert _{L_{q_{1}}\left(  B\right)  }\nonumber \\
& \equiv F_{1}+F_{2}+F_{3}+F_{4}.\label{0}%
\end{align}
One observes that the estimate of $F_{2}$ is analogous to that of $F_{3}$.
Thus, we will only estimate $F_{1}$, $F_{2}$ and $F_{4}$.

To estimate $F_{1}$, let $1<\overline{q},\overline{r}<\infty$, such
that$\frac{1}{\overline{q}}=\frac{1}{p}-\frac{\alpha}{n},$ $\frac{1}{q_{1}%
}=\frac{1}{\overline{r}}+\frac{1}{\overline{q}},\frac{1}{\overline{r}}%
=\frac{1}{p_{1}}+\frac{1}{p_{2}}$. Then, using H\"{o}lder's inequality and by
the boundedness of $T_{\Omega,\alpha}$ from $L_{p}$ into $L_{\overline{q}}$
(see Lemma \ref{Lemma1}) it follows that:%
\begin{align*}
F_{1}  & =\left \Vert \left(  b_{1}-\left(  b_{1}\right)  _{B}\right)  \left(
b_{2}\left(  x\right)  -\left(  b_{2}\right)  _{B}\right)  T_{\Omega,\alpha
}f_{1}\right \Vert _{L_{q_{1}}\left(  B\right)  }\\
& \lesssim \left \Vert \left(  b_{1}-\left(  b_{1}\right)  _{B}\right)  \left(
b_{2}-\left(  b_{2}\right)  _{B}\right)  \right \Vert _{L_{\overline{r}}\left(
B\right)  }\left \Vert T_{\Omega,\alpha}f_{1}\right \Vert _{L_{\overline{q}%
}\left(  B\right)  }\\
& \lesssim \left \Vert b_{1}-\left(  b_{1}\right)  _{B}\right \Vert _{L_{p_{1}%
}\left(  B\right)  }\left \Vert b_{2}-\left(  b_{2}\right)  _{B}\right \Vert
_{L_{p_{2}}\left(  B\right)  }\left \Vert f\right \Vert _{L_{p}\left(
2B\right)  }\\
& \lesssim \left \Vert b_{1}-\left(  b_{1}\right)  _{B}\right \Vert _{L_{p_{1}%
}\left(  B\right)  }\left \Vert b_{2}-\left(  b_{2}\right)  _{B}\right \Vert
_{L_{p_{2}}\left(  B\right)  }r^{n\left(  \frac{1}{p}-\frac{\alpha}{n}\right)
}\int \limits_{2r}^{\infty}\left \Vert f\right \Vert _{L_{q_{1}}\left(
B(x_{0},t)\right)  }\frac{dt}{t^{\frac{n}{p}+1-\alpha}}.
\end{align*}
From Lemma \ref{Lemma 4}, it is easy to see that%
\begin{equation}
\left \Vert b_{i}-\left(  b_{i}\right)  _{B}\right \Vert _{L_{p_{i}}\left(
B\right)  }\leq Cr^{\frac{n}{p_{i}}+n\lambda_{i}}\left \Vert b_{i}\right \Vert
_{LC_{p_{i},\lambda_{i}}^{\left \{  x_{0}\right \}  }},\label{1}%
\end{equation}
and%
\begin{align}
\left \Vert b_{i}-\left(  b_{i}\right)  _{B}\right \Vert _{L_{p_{i}}\left(
2B\right)  } &  \leq \left \Vert b_{i}-\left(  b_{i}\right)  _{2B}\right \Vert
_{L_{p_{i}}\left(  2B\right)  }+\left \Vert \left(  b_{i}\right)  _{B}-\left(
b_{i}\right)  _{2B}\right \Vert _{L_{p_{i}}\left(  2B\right)  }\nonumber \\
&  \lesssim r^{\frac{n}{p_{i}}+n\lambda_{i}}\left \Vert b_{i}\right \Vert
_{LC_{p_{i},\lambda_{i}}^{\left \{  x_{0}\right \}  }},\label{2*}%
\end{align}
for $i=1$, $2$. Hence, by (\ref{1}) we get%
\begin{align*}
F_{1}  & \lesssim \Vert b_{1}\Vert_{LC_{p_{1},\lambda_{1}}^{\left \{
x_{0}\right \}  }}\Vert b_{2}\Vert_{LC_{p_{2},\lambda_{2}}^{\left \{
x_{0}\right \}  }}r^{n\left(  \frac{1}{p_{1}}+\frac{1}{p_{2}}+\frac{1}{p}%
-\frac{\alpha}{n}\right)  }\int \limits_{2r}^{\infty}\left(  1+\ln \frac{t}%
{r}\right)  ^{2}t^{-\frac{n}{p}+n\left(  \lambda_{1}+\lambda_{2}\right)
-1+\alpha}\left \Vert f\right \Vert _{L_{p}\left(  B(x_{0},t)\right)  }dt\\
& \lesssim \Vert b_{1}\Vert_{LC_{p_{1},\lambda_{1}}^{\left \{  x_{0}\right \}  }%
}\Vert b_{2}\Vert_{LC_{p_{2},\lambda_{2}}^{\left \{  x_{0}\right \}  }}%
r^{\frac{n}{q_{1}}}\int \limits_{2r}^{\infty}\left(  1+\ln \frac{t}{r}\right)
^{2}t^{-\frac{n}{q_{1}}+n\left(  \lambda_{1}+\lambda_{2}\right)  +n\left(
\frac{1}{p_{1}}+\frac{1}{p_{2}}\right)  -1}\left \Vert f\right \Vert
_{L_{p}\left(  B(x_{0},t)\right)  }dt.
\end{align*}
To estimate $F_{2}$, let $1<\tau<\infty$, such that $\frac{1}{q_{1}}=\frac
{1}{p_{1}}+\frac{1}{\tau}$. Then, similar to the estimates for $F_{1}$, we
have%
\begin{align*}
F_{2} &  =\left \Vert \left(  b_{1}-\left(  b_{1}\right)  _{B}\right)
T_{\Omega,\alpha}\left(  \left(  b_{2}-\left(  b_{2}\right)  _{B}\right)
f_{1}\right)  \right \Vert _{L_{q_{1}}\left(  B\right)  }\\
&  \lesssim \left \Vert b_{1}-\left(  b_{1}\right)  _{B}\right \Vert _{L_{p_{1}%
}\left(  B\right)  }\left \Vert T_{\Omega,\alpha}\left(  \left(  b_{2}\left(
\cdot \right)  -\left(  b_{2}\right)  _{B}\right)  f_{1}\right)  \right \Vert
_{L_{\tau}\left(  B\right)  }\\
&  \lesssim \left \Vert b_{1}-\left(  b_{1}\right)  _{B}\right \Vert _{L_{p_{1}%
}\left(  B\right)  }\left \Vert \left(  b_{2}\left(  \cdot \right)  -\left(
b_{2}\right)  _{B}\right)  f_{1}\right \Vert _{L_{k}\left(  B\right)  }\\
&  \lesssim \left \Vert b_{1}-\left(  b_{1}\right)  _{B}\right \Vert _{L_{p_{1}%
}\left(  B\right)  }\left \Vert b_{2}-\left(  b_{2}\right)  _{B}\right \Vert
_{L_{p_{2}}\left(  2B\right)  }\left \Vert f\right \Vert _{_{L_{p}\left(
2B\right)  }},
\end{align*}
where $1<k<\frac{2n}{\alpha}$, such that $\frac{1}{k}=\frac{1}{p_{2}}+\frac
{1}{p}=\frac{1}{\tau}+\frac{\alpha}{n}$. By (\ref{1}) and (\ref{2*}), we get%
\[
F_{2}\lesssim \Vert b_{1}\Vert_{LC_{p_{1},\lambda_{1}}^{\left \{  x_{0}\right \}
}}\Vert b_{2}\Vert_{LC_{p_{2},\lambda_{2}}^{\left \{  x_{0}\right \}  }}%
r^{\frac{n}{q_{1}}}\int \limits_{2r}^{\infty}\left(  1+\ln \frac{t}{r}\right)
^{2}t^{-\frac{n}{q_{1}}+n\left(  \lambda_{1}+\lambda_{2}\right)  +n\left(
\frac{1}{p_{1}}+\frac{1}{p_{2}}\right)  -1}\left \Vert f\right \Vert
_{L_{p}\left(  B(x_{0},t)\right)  }dt.
\]
In a similar way, $F_{3}$ has the same estimate as above, so we omit the
details. Then we have that%
\[
F_{3}\lesssim \Vert b_{1}\Vert_{LC_{p_{1},\lambda_{1}}^{\left \{  x_{0}\right \}
}}\Vert b_{2}\Vert_{LC_{p_{2},\lambda_{2}}^{\left \{  x_{0}\right \}  }}%
r^{\frac{n}{q_{1}}}\int \limits_{2r}^{\infty}\left(  1+\ln \frac{t}{r}\right)
^{2}t^{-\frac{n}{q_{1}}+n\left(  \lambda_{1}+\lambda_{2}\right)  +n\left(
\frac{1}{p_{1}}+\frac{1}{p_{2}}\right)  -1}\left \Vert f\right \Vert
_{L_{p}\left(  B(x_{0},t)\right)  }dt.
\]

Now let us consider the term $F_{4}$. Let $1<q,\tau<\frac{2n}{\alpha}$, such
that $\frac{1}{q}=\frac{1}{\tau}+\frac{1}{p}$, $\frac{1}{\tau}=\frac{1}{p_{1}%
}+\frac{1}{p_{2}}$ and $\frac{1}{q_{1}}=\frac{1}{q}-\frac{\alpha}{n}$. Then by
the boundedness of $T_{\Omega,\alpha}$ from $L_{q}$ into $L_{q_{1}}$ (see
Lemma \ref{Lemma1}), H\"{o}lder's inequality and (\ref{2*}), we obtain%
\begin{align*}
F_{4}  & =\left \Vert T_{\Omega,\alpha}\left(  \left(  b_{1}-\left(
b_{1}\right)  _{B}\right)  \left(  b_{2}-\left(  b_{2}\right)  _{B}\right)
f_{1}\right)  \right \Vert _{L_{q_{1}}\left(  B\right)  }\\
& \lesssim \left \Vert \left(  b_{1}-\left(  b_{1}\right)  _{B}\right)  \left(
b_{2}-\left(  b_{2}\right)  _{B}\right)  f_{1}\right \Vert _{L_{q}\left(
B\right)  }\\
& \lesssim \left \Vert \left(  b_{1}-\left(  b_{1}\right)  _{B}\right)  \left(
b_{2}-\left(  b_{2}\right)  _{B}\right)  \right \Vert _{L_{\tau}\left(
B\right)  }\left \Vert f_{1}\right \Vert _{L_{p}\left(  B\right)  }\\
& \lesssim \left \Vert b_{1}-\left(  b_{1}\right)  _{B}\right \Vert _{L_{p_{1}%
}\left(  2B\right)  }\left \Vert b_{2}-\left(  b_{2}\right)  _{B}\right \Vert
_{L_{p_{2}}\left(  2B\right)  }\left \Vert f\right \Vert _{_{L_{p}\left(
2B\right)  }}\\
& \lesssim \Vert b_{1}\Vert_{LC_{p_{1},\lambda_{1}}^{\left \{  x_{0}\right \}  }%
}\Vert b_{2}\Vert_{LC_{p_{2},\lambda_{2}}^{\left \{  x_{0}\right \}  }}%
r^{\frac{n}{q_{1}}}\int \limits_{2r}^{\infty}\left(  1+\ln \frac{t}{r}\right)
^{2}t^{-\frac{n}{q_{1}}+n\left(  \lambda_{1}+\lambda_{2}\right)  +n\left(
\frac{1}{p_{1}}+\frac{1}{p_{2}}\right)  -1}\left \Vert f\right \Vert
_{L_{p}\left(  B(x_{0},t)\right)  }dt.
\end{align*}
Combining all the estimates of $F_{1}$, $F_{2}$, $F_{3}$, $F_{4}$; we get%
\begin{align*}
F  & =\left \Vert [\left(  b_{1},b_{2}\right)  ,T_{\Omega,\alpha}%
]f_{1}\right \Vert _{L_{q_{1}}\left(  B\right)  }\lesssim \Vert b_{1}%
\Vert_{LC_{p_{1},\lambda_{1}}^{\left \{  x_{0}\right \}  }}\Vert b_{2}%
\Vert_{LC_{p_{2},\lambda_{2}}^{\left \{  x_{0}\right \}  }}r^{\frac{n}{q_{1}}}\\
& \times \int \limits_{2r}^{\infty}\left(  1+\ln \frac{t}{r}\right)
^{2}t^{-\frac{n}{q_{1}}+n\left(  \lambda_{1}+\lambda_{2}\right)  +n\left(
\frac{1}{p_{1}}+\frac{1}{p_{2}}\right)  -1}\left \Vert f\right \Vert
_{L_{p}\left(  B(x_{0},t)\right)  }dt.
\end{align*}

Now, let us estimate $G=\left \Vert [\left(  b_{1},b_{2}\right)  ,T_{\Omega
,\alpha}]f_{2}\right \Vert _{L_{q_{1}}\left(  B\right)  }$. For $G$, it's
similar to (\ref{0}) we also write%
\begin{align*}
G  & =\left \Vert [\left(  b_{1},b_{2}\right)  ,T_{\Omega,\alpha}%
]f_{2}\right \Vert _{L_{q_{1}}\left(  B\right)  }\lesssim \\
& \left \Vert \left(  b_{1}-\left(  b_{1}\right)  _{B}\right)  \left(
b_{2}\left(  x\right)  -\left(  b_{2}\right)  _{B}\right)  T_{\Omega,\alpha
}f_{2}\right \Vert _{L_{q_{1}}\left(  B\right)  }\\
& +\left \Vert \left(  b_{1}-\left(  b_{1}\right)  _{B}\right)  T_{\Omega
,\alpha}\left(  \left(  b_{2}-\left(  b_{2}\right)  _{B}\right)  f_{2}\right)
\right \Vert _{L_{q_{1}}\left(  B\right)  }\\
& +\left \Vert \left(  b_{2}-\left(  b_{2}\right)  _{B}\right)  T_{\Omega
,\alpha}\left(  \left(  b_{1}-\left(  b_{1}\right)  _{B}\right)  f_{2}\right)
\right \Vert _{L_{q_{1}}\left(  B\right)  }\\
& +\left \Vert T_{\Omega,\alpha}\left(  \left(  b_{1}-\left(  b_{1}\right)
_{B}\right)  \left(  b_{2}-\left(  b_{2}\right)  _{B}\right)  f_{2}\right)
\right \Vert _{L_{q_{1}}\left(  B\right)  }\\
& \equiv G_{1}+G_{2}+G_{3}+G_{4}.
\end{align*}
To estimate $G_{1}$, let $1<p_{1},p_{2}<\frac{2n}{\alpha}$, such that
$\frac{1}{q_{1}}=\frac{1}{\overline{p}}+\frac{1}{\overline{q}}$, $\frac
{1}{\overline{p}}=\frac{1}{p_{1}}+\frac{1}{p_{2}}$ and $\frac{1}{\overline{q}%
}=\frac{1}{p}-\frac{\alpha}{n}$. Then, using H\"{o}lder's inequality, noting
that in (\ref{10}) and by (\ref{1}), we have%
\begin{align*}
G_{1}  & =\left \Vert \left(  b_{1}-\left(  b_{1}\right)  _{B}\right)  \left(
b_{2}\left(  x\right)  -\left(  b_{2}\right)  _{B}\right)  T_{\Omega,\alpha
}f_{2}\right \Vert _{L_{q_{1}}\left(  B\right)  }\\
& \lesssim \left \Vert \left(  b_{1}-\left(  b_{1}\right)  _{B}\right)  \left(
b_{2}-\left(  b_{2}\right)  _{B}\right)  \right \Vert _{L_{\overline{p}}\left(
B\right)  }\left \Vert T_{\Omega,\alpha}f_{2}\right \Vert _{L_{\overline{q}%
}\left(  B\right)  }\\
& \lesssim \left \Vert b_{1}-\left(  b_{1}\right)  _{B}\right \Vert _{L_{p_{1}%
}\left(  B\right)  }\left \Vert b_{2}-\left(  b_{2}\right)  _{B}\right \Vert
_{L_{p_{2}}\left(  B\right)  }r^{\frac{n}{_{\overline{q}}}}\int \limits_{2r}%
^{\infty}\left \Vert f\right \Vert _{L_{p}\left(  B\left(  x_{0},t\right)
\right)  }t^{-\frac{n}{\overline{q}}-1}dt\\
& \lesssim \Vert b_{1}\Vert_{LC_{p_{1},\lambda_{1}}^{\left \{  x_{0}\right \}  }%
}\Vert b_{2}\Vert_{LC_{p_{2},\lambda_{2}}^{\left \{  x_{0}\right \}  }%
}r^{n\left(  \frac{1}{p_{1}}+\frac{1}{p_{2}}+\frac{1}{p}-\frac{\alpha}%
{n}\right)  }\\
& \times \int \limits_{2r}^{\infty}\left(  1+\ln \frac{t}{r}\right)
^{2}t^{-\frac{n}{p}+n\left(  \lambda_{1}+\lambda_{2}\right)  -1+\alpha
}\left \Vert f\right \Vert _{L_{p}\left(  B(x_{0},t)\right)  }dt\\
& \lesssim \Vert b_{1}\Vert_{LC_{p_{1},\lambda_{1}}^{\left \{  x_{0}\right \}  }%
}\Vert b_{2}\Vert_{LC_{p_{2},\lambda_{2}}^{\left \{  x_{0}\right \}  }}%
r^{\frac{n}{q_{1}}}\int \limits_{2r}^{\infty}\left(  1+\ln \frac{t}{r}\right)
^{2}t^{-\frac{n}{q_{1}}+n\left(  \lambda_{1}+\lambda_{2}\right)  +n\left(
\frac{1}{p_{1}}+\frac{1}{p_{2}}\right)  -1}\left \Vert f\right \Vert
_{L_{p}\left(  B(x_{0},t)\right)  }dt.
\end{align*}

On the other hand, for the estimates used in $G_{2}$, $G_{3}$, we have to
prove the below inequality:%
\begin{equation}
\left \vert T_{\Omega,\alpha}\left(  \left(  b_{2}-\left(  b_{2}\right)
_{B}\right)  f_{2}\right)  \left(  x\right)  \right \vert \lesssim \left \Vert
b\right \Vert _{LC_{p_{2},\lambda}^{\left \{  x_{0}\right \}  }}%
{\displaystyle \int \limits_{2r}^{\infty}}
\left(  1+\ln \frac{t}{r}\right)  t^{-\frac{n}{p}+n\lambda_{2}-1+\alpha
}\left \Vert f\right \Vert _{L_{p}\left(  B\left(  x_{0},t\right)  \right)
}dt.\label{11}%
\end{equation}
Indeed, when $s^{\prime}\leq q$, for $x\in B$, by Fubini's theorem and
applying H\"{o}lder's inequality and from (\ref{b}), (\ref{c}), (\ref{e311})
we have

$\left \vert T_{\Omega,\alpha}\left(  \left(  b_{2}-\left(  b_{2}\right)
_{B}\right)  f_{2}\right)  \left(  x\right)  \right \vert \lesssim%
{\displaystyle \int \limits_{\left(  2B\right)  ^{C}}}
\left \vert b_{2}\left(  y\right)  -\left(  b_{2}\right)  _{B}\right \vert
\left \vert \Omega(x,x-y)\right \vert \frac{\left \vert f\left(  y\right)
\right \vert }{\left \vert x-y\right \vert ^{n-\alpha}}dy$

$\lesssim%
{\displaystyle \int \limits_{\left(  2B\right)  ^{C}}}
\left \vert b_{2}\left(  y\right)  -\left(  b_{2}\right)  _{B}\right \vert
\left \vert \Omega(x,x-y)\right \vert \frac{\left \vert f\left(  y\right)
\right \vert }{\left \vert x_{0}-y\right \vert ^{n-\alpha}}dy$

$\approx%
{\displaystyle \int \limits_{2r}^{\infty}}
{\displaystyle \int \limits_{2r<\left \vert x_{0}-y\right \vert <t}}
\left \vert b_{2}\left(  y\right)  -\left(  b_{2}\right)  _{B}\right \vert
\left \vert \Omega(x,x-y)\right \vert \left \vert f\left(  y\right)  \right \vert
dy\frac{dt}{t^{n-\alpha+1}}$

$\lesssim%
{\displaystyle \int \limits_{2r}^{\infty}}
{\displaystyle \int \limits_{B\left(  x_{0},t\right)  }}
\left \vert b_{2}\left(  y\right)  -\left(  b_{2}\right)  _{B\left(
x_{0},t\right)  }\right \vert \left \vert \Omega(x,x-y)\right \vert \left \vert
f\left(  y\right)  \right \vert dy\frac{dt}{t^{n-\alpha+1}}$

$+%
{\displaystyle \int \limits_{2r}^{\infty}}
\left \vert \left(  b_{2}\right)  _{B\left(  x_{0},r\right)  }-\left(
b_{2}\right)  _{B\left(  x_{0},t\right)  }\right \vert
{\displaystyle \int \limits_{B\left(  x_{0},t\right)  }}
\left \vert \Omega(x,x-y)\right \vert \left \vert f\left(  y\right)  \right \vert
dy\frac{dt}{t^{n-\alpha+1}}$

$\lesssim%
{\displaystyle \int \limits_{2r}^{\infty}}
\left \Vert b_{2}\left(  \cdot \right)  -\left(  b_{2}\right)  _{B\left(
x_{0},t\right)  }\right \Vert _{L_{p_{2}}\left(  B\left(  x_{0},t\right)
\right)  }\left \Vert \Omega(x,x-\cdot)\right \Vert _{L_{s}\left(  B\left(
x_{0},t\right)  \right)  }\left \Vert f\right \Vert _{L_{p}\left(
B(x_{0},t)\right)  }\left \vert B\left(  x_{0},t\right)  \right \vert
^{1-\frac{1}{p_{2}}-\frac{1}{s}-\frac{1}{p}}\frac{dt}{t^{n-\alpha+1}}$

$+%
{\displaystyle \int \limits_{2r}^{\infty}}
\left \vert \left(  b_{2}\right)  _{B\left(  x_{0},r\right)  }-\left(
b_{2}\right)  _{B\left(  x_{0},t\right)  }\right \vert \left \Vert f\right \Vert
_{L_{p}\left(  B\left(  x_{0},t\right)  \right)  }\left \Vert \Omega
(x,x-\cdot)\right \Vert _{L_{s}\left(  B\left(  x_{0},t\right)  \right)
}\left \vert B\left(  x_{0},t\right)  \right \vert ^{1-\frac{1}{p}-\frac{1}{s}%
}\frac{dt}{t^{n-\alpha+1}}$

$\lesssim%
{\displaystyle \int \limits_{2r}^{\infty}}
\left \Vert b_{2}\left(  \cdot \right)  -\left(  b_{2}\right)  _{B\left(
x_{0},t\right)  }\right \Vert _{L_{p_{2}}\left(  B\left(  x_{0},t\right)
\right)  }\left \Vert f\right \Vert _{L_{p}\left(  B\left(  x_{0},t\right)
\right)  }\frac{dt}{t^{n\left(  \frac{1}{p_{2}}+\frac{1}{p}\right)  -\alpha
+1}}$

$+\left \Vert b\right \Vert _{LC_{p_{2},\lambda}^{\left \{  x_{0}\right \}  }}%
{\displaystyle \int \limits_{2r}^{\infty}}
\left(  1+\ln \frac{t}{r}\right)  t^{-\frac{n}{p}+n\lambda_{2}-1+\alpha
}\left \Vert f\right \Vert _{L_{p}\left(  B\left(  x_{0},t\right)  \right)  }dt$

$\lesssim \left \Vert b\right \Vert _{LC_{p_{2},\lambda}^{\left \{  x_{0}\right \}
}}%
{\displaystyle \int \limits_{2r}^{\infty}}
\left(  1+\ln \frac{t}{r}\right)  t^{-\frac{n}{p}+n\lambda_{2}-1+\alpha
}\left \Vert f\right \Vert _{L_{p}\left(  B\left(  x_{0},t\right)  \right)
}dt.$

This completes the proof of inequality (\ref{11}).

Let $1<\tau<\infty$, such that $\frac{1}{q_{1}}=\frac{1}{p_{1}}+\frac{1}{\tau
}$ and $\frac{1}{\tau}=\frac{1}{p_{2}}+\frac{1}{p}-\frac{\alpha}{n}$. Then,
using H\"{o}lder's inequality and from (\ref{11}) and (\ref{c}), we get%
\begin{align*}
G_{2} &  =\left \Vert \left(  b_{1}-\left(  b_{1}\right)  _{B}\right)
T_{\Omega,\alpha}\left(  \left(  b_{2}-\left(  b_{2}\right)  _{B}\right)
f_{2}\right)  \right \Vert _{L_{q_{1}}\left(  B\right)  }\\
&  \lesssim \left \Vert b_{1}-\left(  b_{1}\right)  _{B}\right \Vert _{L_{p_{1}%
}\left(  B\right)  }\left \Vert T_{\Omega,\alpha}\left(  \left(  b_{2}\left(
\cdot \right)  -\left(  b_{2}\right)  _{B}\right)  f_{2}\right)  \right \Vert
_{L_{\tau}\left(  B\right)  }\\
&  \lesssim \left \Vert b_{1}-\left(  b_{1}\right)  _{B}\right \Vert _{L_{p_{1}%
}\left(  B\right)  }\left \Vert b_{2}\right \Vert _{LC_{p_{2},\lambda_{2}%
}^{\left \{  x_{0}\right \}  }}r^{\frac{n}{\tau}}%
{\displaystyle \int \limits_{2r}^{\infty}}
\left(  1+\ln \frac{t}{r}\right)  t^{-\frac{n}{p}+n\lambda_{2}-1+\alpha
}\left \Vert f\right \Vert _{L_{p}\left(  B\left(  x_{0},t\right)  \right)
}dt\\
&  \lesssim \Vert b_{1}\Vert_{LC_{p_{1},\lambda_{1}}^{\left \{  x_{0}\right \}
}}\Vert b_{2}\Vert_{LC_{p_{2},\lambda_{2}}^{\left \{  x_{0}\right \}  }}%
r^{\frac{n}{q_{1}}}\int \limits_{2r}^{\infty}\left(  1+\ln \frac{t}{r}\right)
^{2}t^{-\frac{n}{q_{1}}+n\left(  \lambda_{1}+\lambda_{2}\right)  +n\left(
\frac{1}{p_{1}}+\frac{1}{p_{2}}\right)  -1}\left \Vert f\right \Vert
_{L_{p}\left(  B(x_{0},t)\right)  }dt.
\end{align*}
Similarly, $G_{3}$ has the same estimate above, so here we omit the details.
Then the inequality%
\begin{align*}
G_{3}  & =\left \Vert \left(  b_{2}-\left(  b_{2}\right)  _{B}\right)
T_{\Omega,\alpha}\left(  \left(  b_{1}-\left(  b_{1}\right)  _{B}\right)
f_{2}\right)  \right \Vert _{L_{q_{1}}\left(  B\right)  }\\
& \lesssim \Vert b_{1}\Vert_{LC_{p_{1},\lambda_{1}}^{\left \{  x_{0}\right \}  }%
}\Vert b_{2}\Vert_{LC_{p_{2},\lambda_{2}}^{\left \{  x_{0}\right \}  }}%
r^{\frac{n}{q_{1}}}\int \limits_{2r}^{\infty}\left(  1+\ln \frac{t}{r}\right)
^{2}t^{-\frac{n}{q_{1}}+n\left(  \lambda_{1}+\lambda_{2}\right)  +n\left(
\frac{1}{p_{1}}+\frac{1}{p_{2}}\right)  -1}\left \Vert f\right \Vert
_{L_{p}\left(  B(x_{0},t)\right)  }dt
\end{align*}
is valid.

Now, let us estimate $G_{4}=\left \Vert T_{\Omega,\alpha}\left(  \left(
b_{1}-\left(  b_{1}\right)  _{B}\right)  \left(  b_{2}-\left(  b_{2}\right)
_{B}\right)  f_{2}\right)  \right \Vert _{L_{q_{1}}\left(  B\right)  }$. It's
similar to the estimate of (\ref{11}), for any $x\in B$, we also write

$\left \vert T_{\Omega,\alpha}\left(  \left(  b_{1}-\left(  b_{1}\right)
_{B}\right)  \left(  b_{2}-\left(  b_{2}\right)  _{B}\right)  f_{2}\right)
\left(  x\right)  \right \vert $

$\lesssim%
{\displaystyle \int \limits_{2r}^{\infty}}
{\displaystyle \int \limits_{B\left(  x_{0},t\right)  }}
\left \vert b_{1}\left(  y\right)  -\left(  b_{1}\right)  _{B\left(
x_{0},t\right)  }\right \vert \left \vert b_{2}\left(  y\right)  -\left(
b_{2}\right)  _{B\left(  x_{0},t\right)  }\right \vert \left \vert
\Omega(x,x-y)\right \vert \left \vert f\left(  y\right)  \right \vert dy\frac
{dt}{t^{n-\alpha+1}}$

$+%
{\displaystyle \int \limits_{2r}^{\infty}}
{\displaystyle \int \limits_{B\left(  x_{0},t\right)  }}
\left \vert b_{1}\left(  y\right)  -\left(  b_{1}\right)  _{B\left(
x_{0},t\right)  }\right \vert \left \vert \left(  b_{2}\right)  _{B\left(
x_{0},t\right)  }-\left(  b_{2}\right)  _{B\left(  x_{0},r\right)
}\right \vert \left \vert \Omega(x,x-y)\right \vert \left \vert f\left(  y\right)
\right \vert dy\frac{dt}{t^{n-\alpha+1}}$

$+%
{\displaystyle \int \limits_{2r}^{\infty}}
{\displaystyle \int \limits_{B\left(  x_{0},t\right)  }}
\left \vert \left(  b_{1}\right)  _{B\left(  x_{0},t\right)  }-\left(
b_{2}\right)  _{B\left(  x_{0},r\right)  }\right \vert \left \vert b_{2}\left(
y\right)  -\left(  b_{2}\right)  _{B\left(  x_{0},t\right)  }\right \vert
\left \vert \Omega(x,x-y)\right \vert \left \vert f\left(  y\right)  \right \vert
dy\frac{dt}{t^{n-\alpha+1}}$

$+%
{\displaystyle \int \limits_{2r}^{\infty}}
{\displaystyle \int \limits_{B\left(  x_{0},t\right)  }}
\left \vert \left(  b_{1}\right)  _{B\left(  x_{0},t\right)  }-\left(
b_{2}\right)  _{B\left(  x_{0},r\right)  }\right \vert \left \vert \left(
b_{2}\right)  _{B\left(  x_{0},t\right)  }-\left(  b_{2}\right)  _{B\left(
x_{0},r\right)  }\right \vert \left \vert \Omega(x,x-y)\right \vert \left \vert
f\left(  y\right)  \right \vert dy\frac{dt}{t^{n-\alpha+1}}$

$\equiv G_{41}+G_{42}+G_{43}+G_{44}.$

Let us estimate $G_{41}$, $G_{42}$, $G_{43}$, $G_{44}$, respectively.

Firstly, to estimate $G_{41}$, similar to the estimate of (\ref{11}), we get%
\[
G_{41}\lesssim \Vert b_{1}\Vert_{LC_{p_{1},\lambda_{1}}^{\left \{
x_{0}\right \}  }}\Vert b_{2}\Vert_{LC_{p_{2},\lambda_{2}}^{\left \{
x_{0}\right \}  }}\int \limits_{2r}^{\infty}\left(  1+\ln \frac{t}{r}\right)
^{2}t^{-\frac{n}{q_{1}}+n\left(  \lambda_{1}+\lambda_{2}\right)  +n\left(
\frac{1}{p_{1}}+\frac{1}{p_{2}}\right)  -1}\left \Vert f\right \Vert
_{L_{p}\left(  B(x_{0},t)\right)  }dt.
\]
Secondly, to estimate $G_{42}$ and $G_{43}$, from (\ref{11}), (\ref{b}) and
(\ref{c}), it follows that%
\[
G_{42}\lesssim \Vert b_{1}\Vert_{LC_{p_{1},\lambda_{1}}^{\left \{
x_{0}\right \}  }}\Vert b_{2}\Vert_{LC_{p_{2},\lambda_{2}}^{\left \{
x_{0}\right \}  }}\int \limits_{2r}^{\infty}\left(  1+\ln \frac{t}{r}\right)
^{2}t^{-\frac{n}{q_{1}}+n\left(  \lambda_{1}+\lambda_{2}\right)  +n\left(
\frac{1}{p_{1}}+\frac{1}{p_{2}}\right)  -1}\left \Vert f\right \Vert
_{L_{p}\left(  B(x_{0},t)\right)  }dt,
\]
and%
\[
G_{43}\lesssim \Vert b_{1}\Vert_{LC_{p_{1},\lambda_{1}}^{\left \{
x_{0}\right \}  }}\Vert b_{2}\Vert_{LC_{p_{2},\lambda_{2}}^{\left \{
x_{0}\right \}  }}\int \limits_{2r}^{\infty}\left(  1+\ln \frac{t}{r}\right)
^{2}t^{-\frac{n}{q_{1}}+n\left(  \lambda_{1}+\lambda_{2}\right)  +n\left(
\frac{1}{p_{1}}+\frac{1}{p_{2}}\right)  -1}\left \Vert f\right \Vert
_{L_{p}\left(  B(x_{0},t)\right)  }dt.
\]
Finally, to estimate $G_{44}$, similar to the estimate of (\ref{11}) and from
(\ref{b}) and (\ref{c}), we have%
\[
G_{44}\lesssim \Vert b_{1}\Vert_{LC_{p_{1},\lambda_{1}}^{\left \{
x_{0}\right \}  }}\Vert b_{2}\Vert_{LC_{p_{2},\lambda_{2}}^{\left \{
x_{0}\right \}  }}\int \limits_{2r}^{\infty}\left(  1+\ln \frac{t}{r}\right)
^{2}t^{-\frac{n}{q_{1}}+n\left(  \lambda_{1}+\lambda_{2}\right)  +n\left(
\frac{1}{p_{1}}+\frac{1}{p_{2}}\right)  -1}\left \Vert f\right \Vert
_{L_{p}\left(  B(x_{0},t)\right)  }dt.
\]
By the estimates of $G_{4j}$ above, where $j=1$, $2$, $3$, we know that
\begin{align*}
\left \vert T_{\Omega,\alpha}\left(  \left(  b_{1}-\left(  b_{1}\right)
_{B}\right)  \left(  b_{2}-\left(  b_{2}\right)  _{B}\right)  f_{2}\right)
\left(  x\right)  \right \vert  & \lesssim \Vert b_{1}\Vert_{LC_{p_{1}%
,\lambda_{1}}^{\left \{  x_{0}\right \}  }}\Vert b_{2}\Vert_{LC_{p_{2}%
,\lambda_{2}}^{\left \{  x_{0}\right \}  }}\\
& \times \int \limits_{2r}^{\infty}\left(  1+\ln \frac{t}{r}\right)
^{2}t^{-\frac{n}{q_{1}}+n\left(  \lambda_{1}+\lambda_{2}\right)  +n\left(
\frac{1}{p_{1}}+\frac{1}{p_{2}}\right)  -1}\left \Vert f\right \Vert
_{L_{p}\left(  B(x_{0},t)\right)  }dt.
\end{align*}
Then, we have%
\begin{align*}
G_{4}  & =\left \Vert T_{\Omega,\alpha}\left(  \left(  b_{1}-\left(
b_{1}\right)  _{B}\right)  \left(  b_{2}-\left(  b_{2}\right)  _{B}\right)
f_{2}\right)  \right \Vert _{L_{q_{1}}\left(  B\right)  }\lesssim \Vert
b_{1}\Vert_{LC_{p_{1},\lambda_{1}}^{\left \{  x_{0}\right \}  }}\Vert b_{2}%
\Vert_{LC_{p_{2},\lambda_{2}}^{\left \{  x_{0}\right \}  }}r^{\frac{n}{q_{1}}}\\
& \times \int \limits_{2r}^{\infty}\left(  1+\ln \frac{t}{r}\right)
^{2}t^{-\frac{n}{q_{1}}+n\left(  \lambda_{1}+\lambda_{2}\right)  +n\left(
\frac{1}{p_{1}}+\frac{1}{p_{2}}\right)  -1}\left \Vert f\right \Vert
_{L_{p}\left(  B(x_{0},t)\right)  }dt.
\end{align*}
So, combining all the estimates for $G_{1},$ $G_{2}$, $G_{3}$, $G_{4}$, we get%
\begin{align*}
G  & =\left \Vert [\left(  b_{1},b_{2}\right)  ,T_{\Omega,\alpha}%
]f_{2}\right \Vert _{L_{q_{1}}\left(  B\right)  }\lesssim \Vert b_{1}%
\Vert_{LC_{p_{1},\lambda_{1}}^{\left \{  x_{0}\right \}  }}\Vert b_{2}%
\Vert_{LC_{p_{2},\lambda_{2}}^{\left \{  x_{0}\right \}  }}r^{\frac{n}{q_{1}}}\\
& \times \int \limits_{2r}^{\infty}\left(  1+\ln \frac{t}{r}\right)
^{2}t^{-\frac{n}{q_{1}}+n\left(  \lambda_{1}+\lambda_{2}\right)  +n\left(
\frac{1}{p_{1}}+\frac{1}{p_{2}}\right)  -1}\left \Vert f\right \Vert
_{L_{p}\left(  B(x_{0},t)\right)  }dt.
\end{align*}
Thus, putting estimates $F$ and $G$ together, we get the desired conclusion%
\begin{align*}
\left \Vert \lbrack \left(  b_{1},b_{2}\right)  ,T_{\Omega,\alpha}]f\right \Vert
_{L_{q_{1}}\left(  B(x_{0},r)\right)  }  & \lesssim \Vert b_{1}\Vert
_{LC_{p_{1},\lambda_{1}}^{\left \{  x_{0}\right \}  }}\Vert b_{2}\Vert
_{LC_{p_{2},\lambda_{2}}^{\left \{  x_{0}\right \}  }}r^{\frac{n}{q_{1}}}\\
& \times \int \limits_{2r}^{\infty}\left(  1+\ln \frac{t}{r}\right)
^{2}t^{-\frac{n}{q_{1}}+n\left(  \lambda_{1}+\lambda_{2}\right)  +n\left(
\frac{1}{p_{1}}+\frac{1}{p_{2}}\right)  -1}\left \Vert f\right \Vert
_{L_{p}\left(  B(x_{0},t)\right)  }dt.
\end{align*}
For the case of $q_{1}<s$, we can also use the same method, so we omit the
details. This completes the proof of Lemma \ref{lemma2}.
\end{proof}

At last, throughout the paper we use the letter $C$ for a positive constant,
independent of appropriate parameters and not necessarily the same at each
occurrence. By $A\lesssim B$ we mean that $A\leq CB$ with some positive
constant $C$ independent of appropriate quantities. If $A\lesssim B$ and
$B\lesssim A$, we write $A\approx B$ and say that $A$ and $B$ are equivalent.

\section{Main Results}

Now we are ready to give the following main results with their proofs , respectively.

\begin{theorem}
\label{teo4}Suppose that $x_{0}\in{\mathbb{R}^{n}}$, $\Omega \in L_{\infty
}({\mathbb{R}^{n}})\times L_{s}(S^{n-1})$, $s>1$, is homogeneous of degree
zero with respect to the second variable $y$ on ${\mathbb{R}^{n}}$. Let
$T_{\Omega,\alpha}$ be a linear operator satisfying condition (\ref{e1}). Let
also $0<\alpha<n$ and $1<q,q_{1},p_{i},p<\frac{n}{\alpha}$ with $\frac{1}%
{q}=\sum \limits_{i=1}^{m}\frac{1}{p_{i}}+\frac{1}{p}$, $\frac{1}{q_{1}}%
=\frac{1}{q}-\frac{\alpha}{n}$ and $\overrightarrow{b}\in LC_{p_{i}%
,\lambda_{i}}^{\left \{  x_{0}\right \}  }({\mathbb{R}^{n}})$ for $0\leq
\lambda_{i}<\frac{1}{n}$, $i=1,\ldots,m$.

Let also, for $s^{\prime}\leq q$ the pair $(\varphi_{1},\varphi_{2})$
satisfies the condition%
\begin{equation}%
{\displaystyle \int \limits_{r}^{\infty}}
\left(  1+\ln \frac{t}{r}\right)  ^{m}\frac{\operatorname*{essinf}%
\limits_{t<\tau<\infty}\varphi_{1}(x_{0},\tau)\tau^{\frac{n}{p}}}{t^{n\left(
\frac{1}{q_{1}}-\left(
{\displaystyle \sum \limits_{i=1}^{m}}
\lambda_{i}+%
{\displaystyle \sum \limits_{i=1}^{m}}
\frac{1}{p_{i}}\right)  \right)  +1}}\leq C\, \varphi_{2}(x_{0},r),\label{47}%
\end{equation}
and for $q_{1}<s$ the pair $(\varphi_{1},\varphi_{2})$ satisfies the condition%
\begin{equation}
\int \limits_{r}^{\infty}\left(  1+\ln \frac{t}{r}\right)  ^{m}\frac
{\operatorname*{essinf}\limits_{t<\tau<\infty}\varphi_{1}(x_{0},\tau
)\tau^{\frac{n}{p}}}{t^{n\left(  \frac{1}{q_{1}}-\left(  \frac{1}{s}+%
{\displaystyle \sum \limits_{i=1}^{m}}
\lambda_{i}+%
{\displaystyle \sum \limits_{i=1}^{m}}
\frac{1}{p_{i}}\right)  \right)  +1}}dt\leq C\, \varphi_{2}(x_{0}%
,r)r^{\frac{n}{s}},\label{48}%
\end{equation}
where $C$ does not depend on $r$.

Then, the operator $[\overrightarrow{b},T_{\Omega,\alpha}]$ is bounded from
$LM_{p,\varphi_{1}}^{\{x_{0}\}}$ to $LM_{q_{1},\varphi_{2}}^{\{x_{0}\}}$.
Moreover,
\[
\left \Vert \lbrack \overrightarrow{b},T_{\Omega,\alpha}]\right \Vert
_{LM_{q_{1},\varphi_{2}}^{\{x_{0}\}}}\lesssim%
{\displaystyle \prod \limits_{i=1}^{m}}
\Vert \overrightarrow{b}\Vert_{LC_{p_{i},\lambda_{i}}^{\left \{  x_{0}\right \}
}}\left \Vert f\right \Vert _{LM_{p,\varphi_{1}}^{\{x_{0}\}}}.
\]

\end{theorem}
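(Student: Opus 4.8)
The plan is to reduce the Morrey-space bound to the pointwise-in-$r$ estimate already established in Lemma \ref{lemma2}, and then absorb the remaining $t$-integral against $\varphi_1$ using the hypothesis (\ref{47}) (resp. (\ref{48})). First I would fix $x_0$ and an arbitrary ball $B(x_0,r)$, and start from the local estimate (\ref{200}):
\[
\Vert[\overrightarrow{b},T_{\Omega,\alpha}]f\Vert_{L_{q_1}(B(x_0,r))}\lesssim
\prod_{i=1}^m\Vert\overrightarrow{b}\Vert_{LC_{p_i,\lambda_i}^{\{x_0\}}}\,
r^{\frac{n}{q_1}}\int_{2r}^{\infty}\Bigl(1+\ln\tfrac{t}{r}\Bigr)^{m}
t^{n\left(-\frac{1}{q_1}+\sum_i\lambda_i+\sum_i\frac{1}{p_i}\right)-1}
\Vert f\Vert_{L_p(B(x_0,t))}\,dt.
\]
The next step is to pass from $\Vert f\Vert_{L_p(B(x_0,t))}$ to the Morrey norm of $f$. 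By definition of $LM_{p,\varphi_1}^{\{x_0\}}$ we have $\Vert f\Vert_{L_p(B(x_0,t))}\le |B(x_0,t)|^{1/p}\varphi_1(x_0,t)\Vert f\Vert_{LM_{p,\varphi_1}^{\{x_0\}}}\approx t^{n/p}\varphi_1(x_0,t)\Vert f\Vert_{LM_{p,\varphi_1}^{\{x_0\}}}$, so it would suffice to bound
\[
r^{\frac{n}{q_1}}\int_{2r}^{\infty}\Bigl(1+\ln\tfrac{t}{r}\Bigr)^{m}
t^{n\left(-\frac{1}{q_1}+\sum_i\lambda_i+\sum_i\frac{1}{p_i}\right)-1}\,
t^{\frac{n}{p}}\varphi_1(x_0,t)\,dt
\]
by $\varphi_2(x_0,r)r^{n/q_1}$. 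This is almost the left-hand side of (\ref{47}), except that (\ref{47}) carries the $\operatorname*{essinf}_{t<\tau<\infty}\varphi_1(x_0,\tau)\tau^{n/p}$ in place of $\varphi_1(x_0,t)t^{n/p}$, which is the weaker quantity. This is exactly the point where the identity (\ref{5}) relating essential supremum and essential infimum enters: I would write, for each fixed $t$,
\[
\varphi_1(x_0,t)\,t^{\frac{n}{p}}
=\frac{\Vert f\Vert_{L_p(B(x_0,t))}}{\mathfrak{M}_{p,\varphi_1}(f;x_0,t)}
\cdot\frac{\varphi_1(x_0,t)t^{n/p}}{\Vert f\Vert_{L_p(B(x_0,t))}}\cdot\mathfrak{M}_{p,\varphi_1}(f;x_0,t)^{-1}\cdot\ldots
\]
more precisely, the standard trick is: for $2r\le t$, $\Vert f\Vert_{L_p(B(x_0,t))}\le\operatorname*{essinf}_{t<\tau<\infty}\dfrac{\Vert f\Vert_{L_p(B(x_0,\tau))}}{\varphi_1(x_0,\tau)\tau^{n/p}}\cdot\varphi_1(x_0,t)t^{n/p}$ fails directionally, so instead one uses that $\Vert f\Vert_{L_p(B(x_0,t))}$ is nondecreasing in $t$ while we control $\Vert f\Vert_{L_p(B(x_0,\tau))}\le\varphi_1(x_0,\tau)\tau^{n/p}\Vert f\Vert_{LM_{p,\varphi_1}^{\{x_0\}}}$ for all $\tau$; hence
\[
\Vert f\Vert_{L_p(B(x_0,t))}\le\Bigl(\operatorname*{essinf}_{t<\tau<\infty}\varphi_1(x_0,\tau)\tau^{\frac{n}{p}}\Bigr)\Vert f\Vert_{LM_{p,\varphi_1}^{\{x_0\}}}\cdot\frac{\Vert f\Vert_{L_p(B(x_0,t))}}{\operatorname*{essinf}_{t<\tau<\infty}\varphi_1(x_0,\tau)\tau^{n/p}\,\Vert f\Vert_{LM_{p,\varphi_1}^{\{x_0\}}}}
\]
is a tautology; the genuine inequality is $\Vert f\Vert_{L_p(B(x_0,t))}\lesssim\Vert f\Vert_{LM_{p,\varphi_1}^{\{x_0\}}}\,t^{n/p}\operatorname*{essinf}_{\tau>t}\varphi_1(x_0,\tau)$, which holds because $\Vert f\Vert_{L_p(B(x_0,t))}\le\Vert f\Vert_{L_p(B(x_0,\tau))}\le\varphi_1(x_0,\tau)\tau^{n/p}\Vert f\Vert_{LM_{p,\varphi_1}^{\{x_0\}}}$ for every $\tau>t$ and one then takes the essential infimum over $\tau>t$, using that $\tau^{n/p}\ge t^{n/p}$ can be estimated the other way only after noting $\varphi$ need not be monotone — but since the essinf is over $\tau>t$ and $\tau^{n/p}$ there is $\ge t^{n/p}$, this is not immediately the right form. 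I would therefore follow the Guliyev-type argument verbatim: the correct bound that combines cleanly with (\ref{47}) is obtained by plugging $\Vert f\Vert_{L_p(B(x_0,t))}\le\operatorname*{essinf}_{\tau>t}\bigl(\varphi_1(x_0,\tau)\tau^{n/p}\bigr)\,\Vert f\Vert_{LM_{p,\varphi_1}^{\{x_0\}}}$ into the integral, which is valid because $t\mapsto\Vert f\Vert_{L_p(B(x_0,t))}$ is nondecreasing and for each $\tau>t$ one has $\Vert f\Vert_{L_p(B(x_0,t))}\le\Vert f\Vert_{L_p(B(x_0,\tau))}\le\varphi_1(x_0,\tau)\tau^{n/p}\Vert f\Vert_{LM_{p,\varphi_1}^{\{x_0\}}}$.

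With that substitution, the integral becomes exactly $r^{n/q_1}$ times the left-hand side of (\ref{47}) (up to the harmless factor $t^{n/p}$ sitting inside the essinf rather than outside — note (\ref{47}) is written with $\varphi_1(x_0,\tau)\tau^{n/p}$ inside the essinf, matching precisely), so the hypothesis gives
\[
\Vert[\overrightarrow{b},T_{\Omega,\alpha}]f\Vert_{L_{q_1}(B(x_0,r))}
\lesssim\prod_{i=1}^m\Vert\overrightarrow{b}\Vert_{LC_{p_i,\lambda_i}^{\{x_0\}}}\,\Vert f\Vert_{LM_{p,\varphi_1}^{\{x_0\}}}\,r^{\frac{n}{q_1}}\varphi_2(x_0,r).
\]
Dividing by $\varphi_2(x_0,r)|B(x_0,r)|^{1/q_1}\approx\varphi_2(x_0,r)r^{n/q_1}$ and taking the supremum over $r>0$ yields the claimed bound $\Vert[\overrightarrow{b},T_{\Omega,\alpha}]f\Vert_{LM_{q_1,\varphi_2}^{\{x_0\}}}\lesssim\prod_i\Vert\overrightarrow{b}\Vert_{LC_{p_i,\lambda_i}^{\{x_0\}}}\Vert f\Vert_{LM_{p,\varphi_1}^{\{x_0\}}}$. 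For the range $q_1<s$ I would run the identical argument starting from the second inequality in Lemma \ref{lemma2} and invoking (\ref{48}) instead, where the extra factors $r^{-n/s}$ on the left and $r^{n/s}$ on the right of (\ref{48}) are exactly calibrated to match the powers $r^{n/q_1-n/s}$ and $t^{n/s}$ appearing there.

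The only real subtlety — the "hard part" — is the manipulation turning $\Vert f\Vert_{L_p(B(x_0,t))}$ into $\operatorname*{essinf}_{\tau>t}\varphi_1(x_0,\tau)\tau^{n/p}$ times the Morrey norm; this is where (\ref{5}) is used and it is the one place where monotonicity of $t\mapsto\Vert f\Vert_{L_p(B(x_0,t))}$ (as opposed to any monotonicity of $\varphi_1$, which is not assumed) is essential. Everything else is bookkeeping: applying Lemma \ref{lemma2}, substituting the definition of the local Morrey norm, pulling the constant $\prod_i\Vert\overrightarrow{b}\Vert_{LC_{p_i,\lambda_i}^{\{x_0\}}}$ and $\Vert f\Vert_{LM_{p,\varphi_1}^{\{x_0\}}}$ out of the integral, and quoting (\ref{47}) or (\ref{48}). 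I do not expect any difficulty with the logarithmic factor $(1+\ln(t/r))^m$ since it is already present verbatim inside both Lemma \ref{lemma2} and the conditions (\ref{47})--(\ref{48}), so it simply carries through.
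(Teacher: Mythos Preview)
Your proposal is correct and follows essentially the same route as the paper's proof: apply Lemma~\ref{lemma2}, replace $\Vert f\Vert_{L_p(B(x_0,t))}$ by $\bigl(\operatorname*{essinf}_{\tau>t}\varphi_1(x_0,\tau)\tau^{n/p}\bigr)\Vert f\Vert_{LM_{p,\varphi_1}^{\{x_0\}}}$ via monotonicity of $t\mapsto\Vert f\Vert_{L_p(B(x_0,t))}$ together with (\ref{5}), invoke condition (\ref{47}) (resp.\ (\ref{48})), and take the supremum over $r$. The paper packages the essinf step as the chain of inequalities (\ref{9}); your final version of that step and its justification coincide with it, though your write-up takes several detours before landing on the right formulation --- in a clean version you should delete the false starts and state directly that $\Vert f\Vert_{L_p(B(x_0,t))}\le\Vert f\Vert_{L_p(B(x_0,\tau))}\le\varphi_1(x_0,\tau)\tau^{n/p}\Vert f\Vert_{LM_{p,\varphi_1}^{\{x_0\}}}$ for all $\tau>t$, then take the essential infimum over $\tau$.
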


\begin{proof}
Since $f\in LM_{p_{1},\varphi_{1}}^{\{x_{0}\}}$, by (\ref{5}) and it is also
non-decreasing, with respect to $t$, of the norm $\left \Vert f\right \Vert
_{L_{p_{1}}\left(  B\left(  x_{0},t\right)  \right)  }$, we get%
\begin{align}
& \frac{\left \Vert f\right \Vert _{L_{p}\left(  B\left(  x_{0},t\right)
\right)  }}{\operatorname*{essinf}\limits_{0<t<\tau<\infty}\varphi_{1}%
(x_{0},\tau)\tau^{\frac{n}{p}}}\leq \operatorname*{esssup}\limits_{0<t<\tau
<\infty}\frac{\left \Vert f\right \Vert _{L_{p}\left(  B\left(  x_{0},t\right)
\right)  }}{\varphi_{1}(x_{0},\tau)\tau^{\frac{n}{p}}}\nonumber \\
& \leq \operatorname*{esssup}\limits_{0<\tau<\infty}\frac{\left \Vert
f\right \Vert _{L_{p}\left(  B\left(  x_{0},\tau \right)  \right)  }}%
{\varphi_{1}(x_{0},\tau)\tau^{\frac{n}{p}}}\leq \left \Vert f\right \Vert
_{LM_{p,\varphi_{1}}^{\{x_{0}\}}}.\label{9}%
\end{align}
For $s^{\prime}\leq q<\infty$, since $(\varphi_{1},\varphi_{2})$ satisfies
(\ref{47}) and by (\ref{9}), we have%
\begin{align}
&
{\displaystyle \int \limits_{r}^{\infty}}
\left(  1+\ln \frac{t}{r}\right)  ^{m}t^{n\left(  -\frac{1}{q_{1}}+\left(
{\displaystyle \sum \limits_{i=1}^{m}}
\lambda_{i}+%
{\displaystyle \sum \limits_{i=1}^{m}}
\frac{1}{p_{i}}\right)  \right)  -1}\Vert f\Vert_{L_{p}(B(x_{0},t))}%
dt\nonumber \\
& \leq \int \limits_{r}^{\infty}\left(  1+\ln \frac{t}{r}\right)  ^{m}%
\frac{\left \Vert f\right \Vert _{L_{p}\left(  B\left(  x_{0},t\right)  \right)
}}{\operatorname*{essinf}\limits_{t<\tau<\infty}\varphi_{1}(x_{0},\tau
)\tau^{\frac{n}{p}}}\frac{\operatorname*{essinf}\limits_{t<\tau<\infty}%
\varphi_{1}(x_{0},\tau)\tau^{\frac{n}{p}}}{t^{n\left(  \frac{1}{q_{1}}-\left(
%
{\displaystyle \sum \limits_{i=1}^{m}}
\lambda_{i}+%
{\displaystyle \sum \limits_{i=1}^{m}}
\frac{1}{p_{i}}\right)  \right)  +1}}dt\nonumber \\
& \leq C\left \Vert f\right \Vert _{LM_{p,\varphi_{1}}^{\{x_{0}\}}}%
\int \limits_{r}^{\infty}\left(  1+\ln \frac{t}{r}\right)  ^{m}\frac
{\operatorname*{essinf}\limits_{t<\tau<\infty}\varphi_{1}(x_{0},\tau
)\tau^{\frac{n}{p}}}{t^{n\left(  \frac{1}{q_{1}}-\left(
{\displaystyle \sum \limits_{i=1}^{m}}
\lambda_{i}+%
{\displaystyle \sum \limits_{i=1}^{m}}
\frac{1}{p_{i}}\right)  \right)  +1}}dt\nonumber \\
& \leq C\left \Vert f\right \Vert _{LM_{p,\varphi_{1}}^{\{x_{0}\}}}\varphi
_{2}(x_{0},r).\label{200*}%
\end{align}
Then by (\ref{200}) and (\ref{200*}), we get%
\begin{align*}
\left \Vert \lbrack \overrightarrow{b},T_{\Omega,\alpha}]f\right \Vert
_{LM_{q_{1},\varphi_{2}}^{\{x_{0}\}}}  & =\sup_{r>0}\varphi_{2}\left(
x_{0},r\right)  ^{-1}|B(x_{0},r)|^{-\frac{1}{q_{1}}}\left \Vert
[\overrightarrow{b},T_{\Omega,\alpha}]f\right \Vert _{L_{q_{1}}\left(  B\left(
x_{0},r\right)  \right)  }\\
& \leq C%
{\displaystyle \prod \limits_{i=1}^{m}}
\Vert \overrightarrow{b}\Vert_{LC_{p_{i},\lambda_{i}}^{\left \{  x_{0}\right \}
}}\sup_{r>0}\varphi_{2}\left(  x_{0},r\right)  ^{-1}\\
& \times%
{\displaystyle \int \limits_{r}^{\infty}}
\left(  1+\ln \frac{t}{r}\right)  ^{m}t^{n\left(  -\frac{1}{q_{1}}+\left(
{\displaystyle \sum \limits_{i=1}^{m}}
\lambda_{i}+%
{\displaystyle \sum \limits_{i=1}^{m}}
\frac{1}{p_{i}}\right)  \right)  -1}\Vert f\Vert_{L_{p}(B(x_{0},t))}dt\\
& \leq C%
{\displaystyle \prod \limits_{i=1}^{m}}
\Vert \overrightarrow{b}\Vert_{LC_{p_{i},\lambda_{i}}^{\left \{  x_{0}\right \}
}}\left \Vert f\right \Vert _{LM_{p,\varphi_{1}}^{\{x_{0}\}}}.
\end{align*}
For the case of $q_{1}<s$, we can also use the same method, so we omit the
details. Thus, we finish the proof of Theorem \ref{teo4}.
\end{proof}

\begin{corollary}
Suppose that $x_{0}\in{\mathbb{R}^{n}}$, $\Omega \in L_{\infty}({\mathbb{R}%
^{n}})\times L_{s}(S^{n-1})$, $s>1$, is homogeneous of degree zero with
respect to the second variable $y$ on ${\mathbb{R}^{n}}$. Let $0<\alpha<n$ and
$1<q,q_{1},p_{i},p<\frac{n}{\alpha}$ with $\frac{1}{q}=\sum \limits_{i=1}%
^{m}\frac{1}{p_{i}}+\frac{1}{p}$, $\frac{1}{q_{1}}=\frac{1}{q}-\frac{\alpha
}{n}$ and $\overrightarrow{b}\in LC_{p_{i},\lambda_{i}}^{\left \{
x_{0}\right \}  }({\mathbb{R}^{n}})$ for $0\leq \lambda_{i}<\frac{1}{n}$,
$i=1,\ldots,m$. Let also, for $s^{\prime}\leq q$ the pair $(\varphi
_{1},\varphi_{2})$ satisfy condition (\ref{47}) and for $q_{1}<s$ the pair
$(\varphi_{1},\varphi_{2})$ satisfy condition (\ref{48}). Then, the operators
$M_{\Omega,\overrightarrow{b},\alpha}$ and $[\overrightarrow{b},\overline
{T}_{\Omega,\alpha}]$ are bounded from $LM_{p,\varphi_{1}}^{\{x_{0}\}}$ to
$LM_{q_{1},\varphi_{2}}^{\{x_{0}\}}$.
\end{corollary}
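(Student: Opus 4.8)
The plan is to deduce the corollary directly from Theorem~\ref{teo4} together with the pointwise domination estimates established for $M_{\Omega,\alpha}$. First I would observe that $\overline{T}_{\Omega,\alpha}$ is precisely the prototype of a linear operator satisfying condition~(\ref{e1}) with $c_{0}=1$ (indeed it satisfies it with equality), and by Lemma~\ref{Lemma1} it is bounded from $L_{p}({\mathbb{R}^{n}})$ to $L_{q}({\mathbb{R}^{n}})$ in the relevant range; hence the multilinear commutator $[\overrightarrow{b},\overline{T}_{\Omega,\alpha}]$ is a special case of the operator $[\overrightarrow{b},T_{\Omega,\alpha}]$ treated in Theorem~\ref{teo4}. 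Applying that theorem verbatim, under the stated hypotheses on $p$, $q$, $q_{1}$, $p_{i}$, $\lambda_{i}$ and the conditions~(\ref{47})--(\ref{48}) on $(\varphi_{1},\varphi_{2})$, gives the boundedness of $[\overrightarrow{b},\overline{T}_{\Omega,\alpha}]$ from $LM_{p,\varphi_{1}}^{\{x_{0}\}}$ to $LM_{q_{1},\varphi_{2}}^{\{x_{0}\}}$ together with the norm inequality.

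For the maximal commutator $M_{\Omega,\overrightarrow{b},\alpha}$ I would argue by pointwise comparison, generalizing the device used in the proof of the Corollary following Lemma~\ref{Lemma1}. Writing out the definition,
\[
M_{\Omega,\overrightarrow{b},\alpha}f(x)=\sup_{t>0}|B(x,t)|^{-1+\frac{\alpha}{n}}\int \limits_{B(x,t)}\prod \limits_{i=1}^{m}\left|b_{i}(x)-b_{i}(y)\right|\left|\Omega(x,x-y)\right||f(y)|\,dy,
\]
one bounds each integral from above by the corresponding integral over all of ${\mathbb{R}^{n}}$ with the factor $|x-y|^{-(n-\alpha)}\ge|B(x,t)|^{-1+\frac{\alpha}{n}}$ on $B(x,t)$ up to the dimensional constant $C_{n,\alpha}$; this yields
\[
M_{\Omega,\overrightarrow{b},\alpha}f(x)\le C_{n,\alpha}^{-1}\int \limits_{{\mathbb{R}^{n}}}\prod \limits_{i=1}^{m}\left|b_{i}(x)-b_{i}(y)\right|\frac{\left|\Omega(x,x-y)\right|}{|x-y|^{n-\alpha}}|f(y)|\,dy,
\]
i.e.\ $M_{\Omega,\overrightarrow{b},\alpha}f(x)\lesssim |[\overrightarrow{b},\widetilde{T}_{|\Omega|,\alpha}]f(x)|$ in the pointwise sense, where $\widetilde{T}_{|\Omega|,\alpha}$ is the auxiliary linear operator with kernel $|\Omega(x,x-y)|/|x-y|^{n-\alpha}$ introduced in the proof of that earlier Corollary. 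Since $|\Omega|$ shares the same $L_{\infty}({\mathbb{R}^{n}})\times L_{s}(S^{n-1})$ membership and homogeneity as $\Omega$, the operator $\widetilde{T}_{|\Omega|,\alpha}$ again satisfies~(\ref{e1}) and is bounded $L_{p}\to L_{q}$ by Lemma~\ref{Lemma1}, so Theorem~\ref{teo4} applies to $[\overrightarrow{b},\widetilde{T}_{|\Omega|,\alpha}]$.

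The conclusion then follows by monotonicity of the $L_{q_{1}}(B(x_{0},r))$ norm under the pointwise bound, and then of the $LM_{q_{1},\varphi_{2}}^{\{x_{0}\}}$ quasinorm, which is an increasing functional of $|M_{\Omega,\overrightarrow{b},\alpha}f|$: one gets
\[
\left\|M_{\Omega,\overrightarrow{b},\alpha}f\right\|_{LM_{q_{1},\varphi_{2}}^{\{x_{0}\}}}\lesssim \left\|[\overrightarrow{b},\widetilde{T}_{|\Omega|,\alpha}]f\right\|_{LM_{q_{1},\varphi_{2}}^{\{x_{0}\}}}\lesssim \prod \limits_{i=1}^{m}\Vert \overrightarrow{b}\Vert_{LC_{p_{i},\lambda_{i}}^{\left\{x_{0}\right\}}}\left\|f\right\|_{LM_{p,\varphi_{1}}^{\{x_{0}\}}}.
\]
The only point requiring a little care — and the place I expect the main (minor) obstacle — is verifying that the pointwise majorization above is legitimate for the \emph{multilinear} commutator: the product $\prod_{i}|b_{i}(x)-b_{i}(y)|$ is nonnegative and does not interact with the absolute value, so passing from $M_{\Omega,\overrightarrow{b},\alpha}f$ (built from $|\Omega|$ and $|f|$) to $|[\overrightarrow{b},\widetilde{T}_{|\Omega|,\alpha}]f|$ is immediate, whereas it would \emph{not} be legitimate to compare with $|[\overrightarrow{b},\overline{T}_{\Omega,\alpha}]f|$ directly because of cancellation in $\Omega$ and in $f$. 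Apart from this bookkeeping the corollary is an immediate consequence of Theorem~\ref{teo4}, so I would keep the write-up short, stating the two reductions and invoking the theorem, exactly in the spirit of the earlier Corollary's proof.
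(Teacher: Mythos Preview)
Your treatment of $[\overrightarrow{b},\overline{T}_{\Omega,\alpha}]$ is exactly what the paper intends: the corollary is stated there without proof, and this half is nothing more than the specialisation of Theorem~\ref{teo4} to the concrete kernel $\Omega(x,x-y)/|x-y|^{n-\alpha}$, which is linear and satisfies~(\ref{e1}).

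For $M_{\Omega,\overrightarrow{b},\alpha}$, however, your pointwise comparison contains a real slip. The majorant you obtain,
\[
\int_{\mathbb{R}^{n}}\prod_{i=1}^{m}\bigl|b_{i}(x)-b_{i}(y)\bigr|\,\frac{|\Omega(x,x-y)|}{|x-y|^{n-\alpha}}\,|f(y)|\,dy,
\]
is \emph{not} $|[\overrightarrow{b},\widetilde{T}_{|\Omega|,\alpha}]f(x)|$: in the multilinear commutator the factors $(b_{i}(x)-b_{i}(y))$ keep their signs inside the integral, and the outer absolute value only gives the \emph{opposite} inequality. You correctly flagged possible cancellation in $\Omega$ and in $f$, but the same cancellation issue arises in the product $\prod_{i}(b_{i}(x)-b_{i}(y))$. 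Consequently Theorem~\ref{teo4}, whose hypothesis is that $T_{\Omega,\alpha}$ is linear, does not apply to that majorant as written, and the chain $M_{\Omega,\overrightarrow{b},\alpha}f\lesssim|[\overrightarrow{b},\widetilde{T}_{|\Omega|,\alpha}]f|$ fails.

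The fix is short and keeps your plan intact. Retain the pointwise bound by the nonnegative integral above, and then observe that the \emph{proof} of Lemma~\ref{lemma2} (hence of Theorem~\ref{teo4}) applies to this expression verbatim: replace the algebraic expansion of $\prod_{i}(b_{i}(x)-b_{i}(y))$ used there by the triangle-inequality expansion
\[
\prod_{i}|b_{i}(x)-b_{i}(y)|\le\prod_{i}\bigl(|b_{i}(x)-(b_{i})_{B}|+|b_{i}(y)-(b_{i})_{B}|\bigr),
\]
and use that $\widetilde{T}_{|\Omega|,\alpha}$ has a nonnegative kernel, so each of the $2^{m}$ resulting terms is dominated by the same quantities $F_{k}$, $G_{k}$ of that proof (now with $|b_{i}-(b_{i})_{B}|$ and $|f|$ in place of $b_{i}-(b_{i})_{B}$ and $f$), whose $L_{q_{1}}(B)$ estimates are already carried out there. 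Inequality~(\ref{200}) therefore holds for the majorant, and the argument of Theorem~\ref{teo4} then yields the $LM_{p,\varphi_{1}}^{\{x_{0}\}}\to LM_{q_{1},\varphi_{2}}^{\{x_{0}\}}$ bound for $M_{\Omega,\overrightarrow{b},\alpha}$. With this one adjustment your write-up is correct and matches the (unwritten) argument the paper has in mind.
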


\begin{corollary}
\cite{Gurbuz, Gurbuz3} Suppose that $x_{0}\in{\mathbb{R}^{n}}$, $\Omega \in
L_{\infty}({\mathbb{R}^{n}})\times L_{s}(S^{n-1})$, $s>1$, is homogeneous of
degree zero with respect to the second variable $y$ on ${\mathbb{R}^{n}}$. Let
$T_{\Omega,\alpha}$ be a linear operator satisfying condition (\ref{e1}) and
bounded from $L_{p}({\mathbb{R}^{n}})$ to $L_{q}({\mathbb{R}^{n}})$. Let
$0<\alpha<n$, $1<p<\frac{n}{\alpha}$, $b\in LC_{p_{2},\lambda}^{\left \{
x_{0}\right \}  }\left(
\mathbb{R}
^{n}\right)  $, $0\leq \lambda<\frac{1}{n}$, $\frac{1}{p}=\frac{1}{p_{1}}%
+\frac{1}{p_{2}}$, $\frac{1}{q}=\frac{1}{p}-\frac{\alpha}{n}$, $\frac{1}%
{q_{1}}=\frac{1}{p_{1}}-\frac{\alpha}{n}$.

Let also, for $s^{\prime}\leq p$ the pair $(\varphi_{1},\varphi_{2})$ satisfy
the condition%
\[
\int \limits_{r}^{\infty}\left(  1+\ln \frac{t}{r}\right)  \frac
{\operatorname*{essinf}\limits_{t<\tau<\infty}\varphi_{1}(x_{0},\tau
)\tau^{\frac{n}{p_{1}}}}{t^{\frac{n}{q_{1}}+1-n\lambda}}dt\leq C\, \varphi
_{2}(x_{0},r),
\]
and for $q_{1}<s$ the pair $(\varphi_{1},\varphi_{2})$ satisfy the condition%
\[
\int \limits_{r}^{\infty}\left(  1+\ln \frac{t}{r}\right)  \frac
{\operatorname*{essinf}\limits_{t<\tau<\infty}\varphi_{1}(x_{0},\tau
)\tau^{\frac{n}{p_{1}}}}{t^{\frac{n}{q_{1}}-\frac{n}{s}+1-n\lambda}}dt\leq C\,
\varphi_{2}(x_{0},r)r^{\frac{n}{s}},
\]
where $C$ does not depend on $r$.

Then, the operator $[b,T_{\Omega,\alpha}]$ is bounded from $LM_{p_{1}%
,\varphi_{1}}^{\{x_{0}\}}$ to $LM_{q,\varphi_{2}}^{\{x_{0}\}}$. Moreover,
\[
\left \Vert \lbrack b,T_{\Omega,\alpha}]\right \Vert _{LM_{q,\varphi_{2}%
}^{\{x_{0}\}}}\lesssim \left \Vert b\right \Vert _{LC_{p_{2},\lambda}^{\left \{
x_{0}\right \}  }}\left \Vert f\right \Vert _{LM_{p_{1},\varphi_{1}}^{\{x_{0}\}}%
}.
\]

\end{corollary}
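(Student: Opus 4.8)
The plan is to derive this corollary from Theorem \ref{teo4} by taking $m=1$ and relabeling the indices; an equivalent self-contained route, rerunning the steps of the proof of Theorem \ref{teo4} with Lemma \ref{lemma2} at $m=1$, is indicated at the end. First I would fix the dictionary between the indices of Theorem \ref{teo4} (decorated with a prime below) and those of the corollary: let the source exponent $p'$ be $p_{1}$, the single Campanato exponent $p_{1}'$ be $p_{2}$, and $\lambda_{1}'$ be $\lambda$. Then the defining relations $\frac{1}{q'}=\frac{1}{p_{1}'}+\frac{1}{p'}$ and $\frac{1}{q_{1}'}=\frac{1}{q'}-\frac{\alpha}{n}$ of Theorem \ref{teo4} become $\frac{1}{q'}=\frac{1}{p_{2}}+\frac{1}{p_{1}}=\frac{1}{p}$ and $\frac{1}{q_{1}'}=\frac{1}{p}-\frac{\alpha}{n}=\frac{1}{q}$, that is $q'=p$ and $q_{1}'=q$. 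Consequently the source space $LM_{p',\varphi_{1}}^{\{x_{0}\}}$ of Theorem \ref{teo4} is exactly $LM_{p_{1},\varphi_{1}}^{\{x_{0}\}}$ and its target space $LM_{q_{1}',\varphi_{2}}^{\{x_{0}\}}$ is exactly $LM_{q,\varphi_{2}}^{\{x_{0}\}}$, matching the statement (the relabeled parameters being assumed to lie in the admissible range $(1,\tfrac{n}{\alpha})$).

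Next I would verify that the two smallness conditions $(\ref{47})$, $(\ref{48})$ of Theorem \ref{teo4}, taken at $m=1$ and rewritten through the dictionary, coincide with the two conditions imposed in the corollary. The one arithmetic fact needed is
\[
\frac{1}{q_{1}'}-\lambda_{1}'-\frac{1}{p_{1}'}=\frac{1}{q}-\lambda-\frac{1}{p_{2}}=\Bigl(\frac{1}{p}-\frac{\alpha}{n}\Bigr)-\frac{1}{p_{2}}-\lambda=\frac{1}{p_{1}}-\frac{\alpha}{n}-\lambda=\frac{1}{q_{1}}-\lambda ,
\]
which converts the denominator $t^{n\left(\frac{1}{q_{1}'}-\lambda_{1}'-\frac{1}{p_{1}'}\right)+1}$ of $(\ref{47})$ into $t^{\frac{n}{q_{1}}+1-n\lambda}$ and the numerator $\operatorname*{essinf}\limits_{t<\tau<\infty}\varphi_{1}(x_{0},\tau)\tau^{n/p'}$ into $\operatorname*{essinf}\limits_{t<\tau<\infty}\varphi_{1}(x_{0},\tau)\tau^{n/p_{1}}$; so $(\ref{47})$, valid under $s^{\prime}\leq q'=p$, is precisely the first displayed condition. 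Keeping the extra $\frac{1}{s}$ in the exponent and the factor $r^{n/s}$, the same substitution turns $(\ref{48})$ into the second displayed condition; Theorem \ref{teo4} requires it under $q_{1}'<s$, i.e. $q<s$, which is implied by the assumed $q_{1}<s$ since $\frac{1}{q}>\frac{1}{q_{1}}$ forces $q<q_{1}$. With all hypotheses of Theorem \ref{teo4} in force, it yields the boundedness of $[b,T_{\Omega,\alpha}]$ from $LM_{p_{1},\varphi_{1}}^{\{x_{0}\}}$ to $LM_{q,\varphi_{2}}^{\{x_{0}\}}$ and the stated norm inequality.

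I do not expect a genuine obstacle: the whole analytic content is already packaged in Lemma \ref{lemma2} and in the $\operatorname*{essinf}$/$\operatorname*{esssup}$ identity $(\ref{5})$, and the only thing requiring care is keeping the two index systems apart while matching the critical exponents exactly. Should one prefer not to cite Theorem \ref{teo4}, the conclusion follows by rerunning its proof: apply Lemma \ref{lemma2} at $m=1$ on a fixed ball $B(x_{0},r)$, which produces the bound $\Vert [b,T_{\Omega,\alpha}]f\Vert_{L_{q}(B(x_{0},r))}\lesssim \Vert b\Vert_{LC_{p_{2},\lambda}^{\{x_{0}\}}}\,r^{n/q}\int_{2r}^{\infty}\bigl(1+\ln\frac{t}{r}\bigr)\,t^{-\frac{n}{q_{1}}+n\lambda-1}\Vert f\Vert_{L_{p_{1}}(B(x_{0},t))}\,dt$; then estimate the integral by inserting and cancelling $\operatorname*{essinf}\limits_{t<\tau<\infty}\varphi_{1}(x_{0},\tau)\tau^{n/p_{1}}$, using $(\ref{9})$ (with $p$ there replaced by $p_{1}$) and the first smallness condition to bound it by $C\,\Vert f\Vert_{LM_{p_{1},\varphi_{1}}^{\{x_{0}\}}}\varphi_{2}(x_{0},r)$; finally divide by $\varphi_{2}(x_{0},r)\,|B(x_{0},r)|^{1/q}$ and take the supremum over $r>0$. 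The case $q_{1}<s$ is handled in the same way using the second condition.
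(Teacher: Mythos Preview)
Your proposal is correct and matches the paper's approach: the corollary is stated without proof, being precisely the $m=1$ specialization of Theorem \ref{teo4}, and your index dictionary ($p'\!=\!p_{1}$, $p_{1}'\!=\!p_{2}$, $\lambda_{1}'\!=\!\lambda$, hence $q'\!=\!p$, $q_{1}'\!=\!q$) together with the exponent identity $\frac{1}{q}-\lambda-\frac{1}{p_{2}}=\frac{1}{q_{1}}-\lambda$ carries out that specialization cleanly. Your observation that the corollary's hypothesis $q_{1}<s$ implies the Theorem's hypothesis $q_{1}'=q<s$ (since $q<q_{1}$) is the only nontrivial check, and it is handled correctly.
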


\begin{theorem}
\label{teo5}Suppose that $x_{0}\in{\mathbb{R}^{n}}$, $\Omega \in L_{\infty
}({\mathbb{R}^{n}})\times L_{s}(S^{n-1})$, $s>1$, is homogeneous of degree
zero with respect to the second variable $y$ on ${\mathbb{R}^{n}}$. Let
$T_{\Omega,\alpha}$ be a linear operator satisfying condition (\ref{e1}). Let
also $0<\alpha<n$ and $1<q,q_{1},p_{i},p<\frac{n}{\alpha}$ with $\frac{1}%
{q}=\sum \limits_{i=1}^{m}\frac{1}{p_{i}}+\frac{1}{p}$, $\frac{1}{q_{1}}%
=\frac{1}{q}-\frac{\alpha}{n}$ and $\overrightarrow{b}\in LC_{p_{i}%
,\lambda_{i}}^{\left \{  x_{0}\right \}  }({\mathbb{R}^{n}})$ for $0\leq
\lambda_{i}<\frac{1}{n}$, $i=1,\ldots,m$. Let for $s^{\prime}\leq q$ the pair
$(\varphi_{1},\varphi_{2})$ satisfies conditions (\ref{2})-(\ref{3}) and%
\begin{equation}%
{\displaystyle \int \limits_{r}^{\infty}}
\left(  1+\ln \frac{t}{r}\right)  ^{m}\varphi_{1}(x_{0},t)\frac{t^{\frac{n}{p}%
}}{t^{n\left(  \frac{1}{q_{1}}-\left(
{\displaystyle \sum \limits_{i=1}^{m}}
\lambda_{i}+%
{\displaystyle \sum \limits_{i=1}^{m}}
\frac{1}{p_{i}}\right)  \right)  +1}}\leq C_{0}\, \varphi_{2}(x_{0}%
,r),\label{10*}%
\end{equation}
where $C_{0}$ does not depend on $r>0$,%
\begin{equation}
\lim_{r\rightarrow0}\frac{\ln \frac{1}{r}}{\varphi_{2}(x_{0},r)}=0\label{11*}%
\end{equation}
and%
\begin{equation}
c_{\delta}:=%
{\displaystyle \int \limits_{\delta}^{\infty}}
\left(  1+\ln \left \vert t\right \vert \right)  ^{m}\varphi_{1}\left(
x_{0},t\right)  \frac{t^{\frac{n}{p}}}{t^{n\left(  \frac{1}{q_{1}}-\left(
{\displaystyle \sum \limits_{i=1}^{m}}
\lambda_{i}+%
{\displaystyle \sum \limits_{i=1}^{m}}
\frac{1}{p_{i}}\right)  \right)  +1}}dt<\infty \label{12*}%
\end{equation}
for every $\delta>0$, and for $q_{1}<s$ the pair $(\varphi_{1},\varphi_{2})$
satisfies conditions (\ref{2})-(\ref{3}) and also%
\begin{equation}%
{\displaystyle \int \limits_{r}^{\infty}}
\left(  1+\ln \frac{t}{r}\right)  ^{m}\varphi_{1}(x_{0},t)\frac{t^{\frac{n}{p}%
}}{t^{n\left(  \frac{1}{q_{1}}-\left(  \frac{1}{s}+%
{\displaystyle \sum \limits_{i=1}^{m}}
\lambda_{i}+%
{\displaystyle \sum \limits_{i=1}^{m}}
\frac{1}{p_{i}}\right)  \right)  +1}}\leq C\, \varphi_{2}(x_{0},r)r^{\frac
{n}{s}},\label{13*}%
\end{equation}
where $C_{0}$ does not depend on $r>0$,%
\[
\lim_{r\rightarrow0}\frac{\ln \frac{1}{r}}{\varphi_{2}(x_{0},r)}=0
\]
and%
\begin{equation}
c_{\delta^{\prime}}:=%
{\displaystyle \int \limits_{\delta^{\prime}}^{\infty}}
\left(  1+\ln \left \vert t\right \vert \right)  ^{m}\varphi_{1}\left(
x_{0},t\right)  \frac{t^{\frac{n}{p}}}{t^{n\left(  \frac{1}{q_{1}}-\left(
\frac{1}{s}+%
{\displaystyle \sum \limits_{i=1}^{m}}
\lambda_{i}+%
{\displaystyle \sum \limits_{i=1}^{m}}
\frac{1}{p_{i}}\right)  \right)  +1}}dt<\infty \label{14*}%
\end{equation}
for every $\delta^{\prime}>0$.

Then the operator $[\overrightarrow{b},T_{\Omega,\alpha}]$ is bounded from
$VLM_{p,\varphi_{1}}^{\left \{  x_{0}\right \}  }$ to$VLM_{q_{1},\varphi_{2}%
}^{\left \{  x_{0}\right \}  }$. Moreover,%
\begin{equation}
\left \Vert \lbrack \overrightarrow{b},T_{\Omega,\alpha}]f\right \Vert
_{VLM_{q_{1},\varphi_{2}}^{\left \{  x_{0}\right \}  }}\lesssim%
{\displaystyle \prod \limits_{i=1}^{m}}
\Vert \overrightarrow{b}\Vert_{LC_{p_{i},\lambda_{i}}^{\left \{  x_{0}\right \}
}}\left \Vert f\right \Vert _{VLM_{p,\varphi_{1}}^{\left \{  x_{0}\right \}  }%
}.\label{15*}%
\end{equation}

\end{theorem}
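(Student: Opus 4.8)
The plan is to deduce the theorem from the local estimate of Lemma \ref{lemma2} by splitting the $t$-integral near the origin, just as one passes from a non-vanishing generalized local Morrey estimate to its vanishing version. The size estimate (\ref{15*}) is almost immediate: since $\operatorname*{essinf}\limits_{t<\tau<\infty}\varphi_{1}(x_{0},\tau)\,\tau^{\frac{n}{p}}\leq\varphi_{1}(x_{0},t)\,t^{\frac{n}{p}}$, hypothesis (\ref{10*}) implies hypothesis (\ref{47}), and similarly (\ref{13*}) implies (\ref{48}); hence Theorem \ref{teo4} already yields
\[
\left\Vert[\overrightarrow{b},T_{\Omega,\alpha}]f\right\Vert_{LM_{q_{1},\varphi_{2}}^{\left\{x_{0}\right\}}}\lesssim\prod\limits_{i=1}^{m}\Vert\overrightarrow{b}\Vert_{LC_{p_{i},\lambda_{i}}^{\left\{x_{0}\right\}}}\left\Vert f\right\Vert_{LM_{p,\varphi_{1}}^{\left\{x_{0}\right\}}},
\]
and, the $VLM$- and $LM$-norms being equal, (\ref{15*}) follows. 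So the real content of the theorem is the inclusion $[\overrightarrow{b},T_{\Omega,\alpha}]f\in VLM_{q_{1},\varphi_{2}}^{\left\{x_{0}\right\}}$, i.e. that $\mathfrak{M}_{q_{1},\varphi_{2}}\left([\overrightarrow{b},T_{\Omega,\alpha}]f;x_{0},r\right)\to0$ as $r\to0$ whenever $\mathfrak{M}_{p,\varphi_{1}}(f;x_{0},r)\to0$.

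To prove this, I would insert the estimate (\ref{200}) into the definition of $\mathfrak{M}_{q_{1},\varphi_{2}}$ and write $\left\Vert f\right\Vert_{L_{p}(B(x_{0},t))}\approx\mathfrak{M}_{p,\varphi_{1}}(f;x_{0},t)\,\varphi_{1}(x_{0},t)\,t^{\frac{n}{p}}$. After the factors $r^{\pm n/q_{1}}$ cancel, and since $\prod_{i}\Vert\overrightarrow{b}\Vert_{LC_{p_{i},\lambda_{i}}^{\left\{x_{0}\right\}}}$ is a fixed constant, the matter reduces (in the range $s'\leq q$) to showing that
\[
\frac{1}{\varphi_{2}(x_{0},r)}\int\limits_{2r}^{\infty}\left(1+\ln\frac{t}{r}\right)^{m}\varphi_{1}(x_{0},t)\,\mathfrak{M}_{p,\varphi_{1}}(f;x_{0},t)\,\frac{t^{\frac{n}{p}}}{t^{n\left(\frac{1}{q_{1}}-\left(\sum\limits_{i=1}^{m}\lambda_{i}+\sum\limits_{i=1}^{m}\frac{1}{p_{i}}\right)\right)+1}}\,dt\longrightarrow0\qquad(r\to0),
\]
the integrand here being exactly that of (\ref{10*}) multiplied by $\mathfrak{M}_{p,\varphi_{1}}(f;x_{0},t)$.

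Now fix $\varepsilon>0$. Since $f\in VLM_{p,\varphi_{1}}^{\left\{x_{0}\right\}}$, choose $\delta_{0}>0$ with $\mathfrak{M}_{p,\varphi_{1}}(f;x_{0},t)<\varepsilon$ for $0<t\leq\delta_{0}$, and for $0<r<\delta_{0}/2$ split the integral as $\int_{2r}^{\delta_{0}}+\int_{\delta_{0}}^{\infty}$. On $[2r,\delta_{0}]$ one bounds $\mathfrak{M}_{p,\varphi_{1}}(f;x_{0},t)<\varepsilon$, enlarges the domain to $[r,\infty)$, and applies (\ref{10*}) to obtain a contribution $\lesssim\varepsilon\,\varphi_{2}(x_{0},r)$, hence $\lesssim\varepsilon$ after division by $\varphi_{2}(x_{0},r)$. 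On $[\delta_{0},\infty)$ one bounds $\mathfrak{M}_{p,\varphi_{1}}(f;x_{0},t)\leq\left\Vert f\right\Vert_{VLM_{p,\varphi_{1}}^{\left\{x_{0}\right\}}}$ and uses the elementary inequality $\left(1+\ln\frac{t}{r}\right)^{m}\leq\left(1+\ln\frac{1}{r}\right)^{m}\left(1+\left\vert\ln t\right\vert\right)^{m}$, valid for $t\geq\delta_{0}$ and $0<r<1$; the remaining $t$-integral is then the finite constant $c_{\delta_{0}}$ of (\ref{12*}), so that dividing by $\varphi_{2}(x_{0},r)$ leaves a term $\lesssim c_{\delta_{0}}\left\Vert f\right\Vert_{VLM_{p,\varphi_{1}}^{\left\{x_{0}\right\}}}\left(1+\ln\frac{1}{r}\right)^{m}/\varphi_{2}(x_{0},r)$, which tends to $0$ by (\ref{11*}) and so is $<\varepsilon$ for all sufficiently small $r$. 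As $\varepsilon>0$ was arbitrary, the limit is $0$.

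The range $q_{1}<s$ is handled in exactly the same way, this time starting from the second inequality of Lemma \ref{lemma2} (which carries the prefactor $r^{\frac{n}{q_{1}}-\frac{n}{s}}$ and an extra factor $t^{\frac{n}{s}}$ in the integrand), and using (\ref{13*}) and (\ref{14*}) in place of (\ref{10*}) and (\ref{12*}), together with the corresponding limit hypothesis on $\varphi_{2}$. The one genuinely delicate point, which I would write out in full, is the tail $\int_{\delta_{0}}^{\infty}$: one must peel off from $\left(1+\ln\frac{t}{r}\right)^{m}$ a purely $t$-dependent factor so that the resulting $t$-integral over $[\delta_{0},\infty)$ converges (by (\ref{12*}), or by (\ref{14*}) in the case $q_{1}<s$), while the surviving purely $r$-dependent factor $\left(1+\ln\frac{1}{r}\right)^{m}/\varphi_{2}(x_{0},r)$ is killed by (\ref{11*}); everything else merely reproduces the computations already performed in the proof of Theorem \ref{teo4}.
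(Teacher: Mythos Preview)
Your proposal is correct and takes essentially the same approach as the paper: the norm bound (\ref{15*}) is reduced to Theorem \ref{teo4} (via the observation that (\ref{10*}) implies (\ref{47})), and the vanishing property is obtained by inserting the local estimate (\ref{200}), splitting the $t$-integral at a small $\delta_{0}$, controlling $\int_{r}^{\delta_{0}}$ by the vanishing hypothesis together with (\ref{10*}), and handling the tail $\int_{\delta_{0}}^{\infty}$ via (\ref{12*}) and (\ref{11*}). The only cosmetic difference is how you separate the $t$- and $r$-dependence of $(1+\ln\tfrac{t}{r})^{m}$ on the tail: you use the multiplicative bound $(1+\ln\tfrac{t}{r})^{m}\le(1+\ln\tfrac{1}{r})^{m}(1+|\ln t|)^{m}$, whereas the paper uses the additive bound $1+\ln\tfrac{t}{r}\le 1+|\ln t|+\ln\tfrac{1}{r}$ before invoking (\ref{11*}).
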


\begin{proof}
Since the inequality (\ref{15*}) holds by Theorem \ref{teo4}, we only have to
prove the implication%
\begin{equation}
\lim \limits_{r\rightarrow0}\frac{r^{-\frac{n}{p}}\Vert f\Vert_{L_{p}%
(B(x_{0},r))}}{\varphi_{1}(x_{0},r)}=0\text{ implies }\lim
\limits_{r\rightarrow0}\frac{r^{-\frac{n}{q_{1}}}\left \Vert [\overrightarrow
{b},T_{\Omega,\alpha}]f\right \Vert _{L_{q_{1}}\left(  B\left(  x_{0},r\right)
\right)  }}{\varphi_{2}(x_{0},r)}=0.\label{16*}%
\end{equation}

To show that%
\[
\frac{r^{-\frac{n}{q_{1}}}\left \Vert [\overrightarrow{b},T_{\Omega,\alpha
}]f\right \Vert _{L_{q_{1}}\left(  B\left(  x_{0},r\right)  \right)  }}%
{\varphi_{2}(x_{0},r)}<\epsilon \text{ for small }r,
\]
we use the estimate (\ref{200}):%
\[
\frac{r^{-\frac{n}{q_{1}}}\left \Vert [\overrightarrow{b},T_{\Omega,\alpha
}]f\right \Vert _{L_{q_{1}}\left(  B\left(  x_{0},r\right)  \right)  }}%
{\varphi_{2}(x_{0},r)}\lesssim \frac{%
{\displaystyle \prod \limits_{i=1}^{m}}
\Vert \overrightarrow{b}\Vert_{LC_{p_{i},\lambda_{i}}^{\left \{  x_{0}\right \}
}}}{\varphi_{2}(x_{0},r)}\,
{\displaystyle \int \limits_{r}^{\infty}}
\left(  1+\ln \frac{t}{r}\right)  ^{m}t^{n\left(  -\frac{1}{q_{1}}+\left(
{\displaystyle \sum \limits_{i=1}^{m}}
\lambda_{i}+%
{\displaystyle \sum \limits_{i=1}^{m}}
\frac{1}{p_{i}}\right)  \right)  -1}\Vert f\Vert_{L_{p}(B(x_{0},t))}dt.
\]

We take $r<\delta_{0}$, where $\delta_{0}$ is small enough and split the integration:%

\begin{equation}
\frac{r^{-\frac{n}{q_{1}}}\left \Vert [\overrightarrow{b},T_{\Omega,\alpha
}]f\right \Vert _{L_{q_{1}}\left(  B\left(  x_{0},r\right)  \right)  }}%
{\varphi_{2}(x_{0},r)}\leq C\left[  I_{\delta_{0}}\left(  x_{0},r\right)
+J_{\delta_{0}}\left(  x_{0},r\right)  \right]  ,\label{17*}%
\end{equation}
where $\delta_{0}>0$ (we may take $\delta_{0}<1$), and
\[
I_{\delta_{0}}\left(  x_{0},r\right)  :=\frac{1}{\varphi_{2}(x_{0},r)}%
{\displaystyle \int \limits_{r}^{\delta_{0}}}
\left(  1+\ln \frac{t}{r}\right)  ^{m}t^{n\left(  -\frac{1}{q_{1}}+\left(
{\displaystyle \sum \limits_{i=1}^{m}}
\lambda_{i}+%
{\displaystyle \sum \limits_{i=1}^{m}}
\frac{1}{p_{i}}\right)  \right)  -1}\Vert f\Vert_{L_{p}(B(x_{0},t))}dt,
\]
and%
\[
J_{\delta_{0}}\left(  x_{0},r\right)  :=\frac{1}{\varphi_{2}(x_{0},r)}%
{\displaystyle \int \limits_{\delta_{0}}^{\infty}}
\left(  1+\ln \frac{t}{r}\right)  ^{m}t^{n\left(  -\frac{1}{q_{1}}+\left(
{\displaystyle \sum \limits_{i=1}^{m}}
\lambda_{i}+%
{\displaystyle \sum \limits_{i=1}^{m}}
\frac{1}{p_{i}}\right)  \right)  -1}\Vert f\Vert_{L_{p}(B(x_{0},t))}dt
\]
and $r<\delta_{0}$. Now we can choose any fixed $\delta_{0}>0$ such that%
\[
\frac{t^{-\frac{n}{p}}\left \Vert f\right \Vert _{L_{p}\left(  B\left(
x_{0},t\right)  \right)  }}{\varphi_{1}(x_{0},t)}<\frac{\epsilon}{2CC_{0}%
},\qquad t\leq \delta_{0},
\]
where $C$ and $C_{0}$ are constants from (\ref{10*}) and (\ref{17*}). This
allows to estimate the first term uniformly in $r\in \left(  0,\delta
_{0}\right)  $:%
\[
CI_{\delta_{0}}\left(  x_{0},r\right)  <\frac{\epsilon}{2},\qquad
0<r<\delta_{0}.
\]
For the second term, writing $1+\ln \frac{t}{r}\leq1+\left \vert \ln
t\right \vert +\ln \frac{1}{r}$, we obtain%
\[
J_{\delta_{0}}\left(  x_{0},r\right)  \leq \frac{c_{\delta_{0}}+\widetilde
{c_{\delta_{0}}}\ln \frac{1}{r}}{\varphi_{2}(x_{0},r)}\left \Vert f\right \Vert
_{LM_{p,\varphi_{1}}^{\left \{  x_{0}\right \}  }},
\]
where $c_{\delta_{0}}$ is the constant from (\ref{12*}) with $\delta
=\delta_{0}$ and $\widetilde{c_{\delta_{0}}}$ is a similar constant with
omitted logarithmic factor in the integrand. Then, by (\ref{11*}) we can
choose small enough $r$ such that%
\[
J_{\delta_{0}}\left(  x_{0},r\right)  <\frac{\epsilon}{2},
\]
which completes the proof of (\ref{16*}).

For the case of $q_{1}<s$, we can also use the same method, to obtain the
desired result. Therefore, the proof of Theorem \ref{teo5} is completed.
\end{proof}

\begin{remark}
Conditions (\ref{12*}) and (\ref{14*}) are not needed in the case when
$\varphi(x_{0},r)$ does not depend on $x_{0}$, since (\ref{12*}) follows from
(\ref{10*}) and similarly, (\ref{14*}) follows from (\ref{13*}) in this case.
\end{remark}

\begin{corollary}
Suppose that $x_{0}\in{\mathbb{R}^{n}}$, $\Omega \in L_{\infty}({\mathbb{R}%
^{n}})\times L_{s}(S^{n-1})$, $s>1$, is homogeneous of degree zero with
respect to the second variable $y$ on ${\mathbb{R}^{n}}$. Let $0<\alpha<n$ and
$1<q,q_{1},p_{i},p<\frac{n}{\alpha}$ with $\frac{1}{q}=\sum \limits_{i=1}%
^{m}\frac{1}{p_{i}}+\frac{1}{p}$, $\frac{1}{q_{1}}=\frac{1}{q}-\frac{\alpha
}{n}$ and $\overrightarrow{b}\in LC_{p_{i},\lambda_{i}}^{\left \{
x_{0}\right \}  }({\mathbb{R}^{n}})$ for $0\leq \lambda_{i}<\frac{1}{n}$,
$i=1,\ldots,m$. Let also, for $s^{\prime}\leq q$ the pair $\left(  \varphi
_{1},\varphi_{2}\right)  $ satisfies conditions (\ref{2})-(\ref{3}%
)-(\ref{11*}) and (\ref{10*})-(\ref{12*}) and for $q_{1}<s$ the pair $\left(
\varphi_{1},\varphi_{2}\right)  $ satisfies conditions (\ref{2})-(\ref{3}%
)-(\ref{11*}) and (\ref{13*})-(\ref{14*}). Then, the operators $M_{\Omega
,\overrightarrow{b},\alpha}$ and $[\overrightarrow{b},\overline{T}%
_{\Omega,\alpha}]$ are bounded from $VLM_{p,\varphi_{1}}^{\left \{
x_{0}\right \}  }$ to $VLM_{q_{1},\varphi_{2}}^{\left \{  x_{0}\right \}  }$.
\end{corollary}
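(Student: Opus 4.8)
The plan is to obtain this corollary as an immediate consequence of Theorem \ref{teo5}, since the hypotheses on the parameters $\alpha,p,q,q_1,p_i,\lambda_i$ and on the pair $(\varphi_1,\varphi_2)$ imposed in the corollary are exactly those of Theorem \ref{teo5}. So the only thing to check is that each of the two operators either \emph{is} a linear operator satisfying the size condition (\ref{e1}) together with the $L_p\to L_q$ bound used implicitly in Lemma \ref{lemma2}, or is pointwise controlled by such an operator; then Theorem \ref{teo5} applies and gives the claimed $VLM_{p,\varphi_1}^{\{x_0\}}\to VLM_{q_1,\varphi_2}^{\{x_0\}}$ boundedness, with the norm estimate inherited from (\ref{15*}).

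First I would treat $[\overrightarrow{b},\overline{T}_{\Omega,\alpha}]$. The operator $\overline{T}_{\Omega,\alpha}$ is linear, satisfies (\ref{e1}) (in fact with equality in absolute value, $c_0=1$), and by Lemma \ref{Lemma1} it is bounded from $L_p(\mathbb{R}^n)$ to $L_q(\mathbb{R}^n)$ under the stated relations between $p,q,\alpha,s$; hence $T_{\Omega,\alpha}=\overline{T}_{\Omega,\alpha}$ satisfies all the hypotheses of Theorem \ref{teo5}, and the conclusion for $[\overrightarrow{b},\overline{T}_{\Omega,\alpha}]$ follows verbatim.

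Next I would reduce $M_{\Omega,\overrightarrow{b},\alpha}$ to the previous case by a pointwise domination, exactly in the spirit of the proof of the Corollary following Lemma \ref{Lemma1}. Using $\bigl|\prod_{i=1}^{m}[b_i(x)-b_i(y)]\bigr|\le\prod_{i=1}^{m}|b_i(x)-b_i(y)|$, and then, for each fixed $t>0$, restricting the integral defining the linear operator
\[
\widetilde{T}_{|\Omega|,\overrightarrow{b},\alpha}(|f|)(x)=\int_{\mathbb{R}^{n}}\prod_{i=1}^{m}|b_i(x)-b_i(y)|\,\frac{|\Omega(x,x-y)|}{|x-y|^{n-\alpha}}\,|f(y)|\,dy
\]
to the ball $B(x,t)$ and estimating $|x-y|^{\alpha-n}\ge t^{\alpha-n}$ there (valid since $\alpha-n<0$), one gets, after taking the supremum over $t>0$,
\[
M_{\Omega,\overrightarrow{b},\alpha}f(x)\le C_{n,\alpha}^{-1}\,\widetilde{T}_{|\Omega|,\overrightarrow{b},\alpha}(|f|)(x),\qquad C_{n,\alpha}=|B(0,1)|^{\frac{n-\alpha}{n}}.
\]
Since $|\Omega|\in L_{\infty}(\mathbb{R}^{n})\times L_{s}(S^{n-1})$ with the same norm as $\Omega$, the operator $\widetilde{T}_{|\Omega|,\alpha}$ is linear, satisfies (\ref{e1}) and satisfies Lemma \ref{Lemma1}; hence Theorem \ref{teo5} applies to it and yields the boundedness of $[\overrightarrow{b},\widetilde{T}_{|\Omega|,\alpha}]$ from $VLM_{p,\varphi_1}^{\{x_0\}}$ to $VLM_{q_1,\varphi_2}^{\{x_0\}}$. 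Because $\bigl\||f|\bigr\|_{VLM_{p,\varphi_1}^{\{x_0\}}}=\|f\|_{VLM_{p,\varphi_1}^{\{x_0\}}}$ and $[\overrightarrow{b},\widetilde{T}_{|\Omega|,\alpha}](|f|)$ equals $\widetilde{T}_{|\Omega|,\overrightarrow{b},\alpha}(|f|)$ pointwise, the displayed pointwise bound transfers the conclusion to $M_{\Omega,\overrightarrow{b},\alpha}$.

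I do not expect a serious obstacle: the substantive work was already carried out in Theorem \ref{teo5} (through Lemmas \ref{lemma1} and \ref{lemma2}). The only points needing a little care are (i) verifying the pointwise domination of $M_{\Omega,\overrightarrow{b},\alpha}$ by the linear commutator $[\overrightarrow{b},\widetilde{T}_{|\Omega|,\alpha}]$ — in particular handling the product of symbol differences by the absolute-value inequality and checking that passing to $|f|$ leaves the source norm unchanged — and (ii) confirming that the parameter constraints and the conditions (\ref{2})--(\ref{3}), (\ref{11*}), and (\ref{10*})--(\ref{12*}) or (\ref{13*})--(\ref{14*}) assumed in the corollary are precisely those under which Theorem \ref{teo5} is valid for the auxiliary operators $\overline{T}_{\Omega,\alpha}$ and $\widetilde{T}_{|\Omega|,\alpha}$.
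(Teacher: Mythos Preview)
Your approach is exactly the one the paper intends: the corollary is stated without proof and is meant to follow from Theorem~\ref{teo5} together with the pointwise domination device used after Lemma~\ref{Lemma1}. Your treatment of $[\overrightarrow{b},\overline{T}_{\Omega,\alpha}]$ is correct as written.

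There is, however, a small slip in the reduction for $M_{\Omega,\overrightarrow{b},\alpha}$. You assert that $[\overrightarrow{b},\widetilde{T}_{|\Omega|,\alpha}](|f|)$ equals $\widetilde{T}_{|\Omega|,\overrightarrow{b},\alpha}(|f|)$ pointwise, but the former has the \emph{signed} product $\prod_i\bigl(b_i(x)-b_i(y)\bigr)$ in the integrand while the latter has $\prod_i|b_i(x)-b_i(y)|$; these do not coincide, and the signed version need not dominate the absolute one. The fix is painless: the local estimate (\ref{200}) in Lemma~\ref{lemma2} is obtained purely from size bounds (condition (\ref{e1}), H\"older, Fubini, and Lemma~\ref{Lemma 4}), so it holds verbatim with $\prod_i|b_i(x)-b_i(y)|$ in place of $\prod_i\bigl(b_i(x)-b_i(y)\bigr)$. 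Hence (\ref{200}) applies directly to $\widetilde{T}_{|\Omega|,\overrightarrow{b},\alpha}(|f|)$, and then the vanishing argument in the proof of Theorem~\ref{teo5} carries over unchanged, giving the $VLM_{p,\varphi_1}^{\{x_0\}}\to VLM_{q_1,\varphi_2}^{\{x_0\}}$ bound for $M_{\Omega,\overrightarrow{b},\alpha}$ via your pointwise domination.
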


\begin{corollary}
Suppose that $x_{0}\in{\mathbb{R}^{n}}$, $\Omega \in L_{\infty}({\mathbb{R}%
^{n}})\times L_{s}(S^{n-1})$, $s>1$, is homogeneous of degree zero with
respect to the second variable $y$ on ${\mathbb{R}^{n}}$. Let $0<\alpha<n$,
$1<p<\frac{n}{\alpha}$, $b\in LC_{p_{2},\lambda}^{\left \{  x_{0}\right \}
}\left(
\mathbb{R}
^{n}\right)  $, $0\leq \lambda<\frac{1}{n}$, $\frac{1}{p}=\frac{1}{p_{1}}%
+\frac{1}{p_{2}}$, $\frac{1}{q}=\frac{1}{p}-\frac{\alpha}{n}$, $\frac{1}%
{q_{1}}=\frac{1}{p_{1}}-\frac{\alpha}{n}$, and $T_{\Omega,\alpha}$ is a linear
operator satisfying condition (\ref{e1}) and bounded from $L_{p}%
({\mathbb{R}^{n}})$ to $L_{q}({\mathbb{R}^{n}})$. Let also, for $s^{\prime
}\leq p$ the pair $(\varphi_{1},\varphi_{2})$ satisfies conditions
(\ref{2})-(\ref{3}) and%
\[
\int \limits_{r}^{\infty}\left(  1+\ln \frac{t}{r}\right)  \varphi_{1}\left(
x_{0},t\right)  \frac{t^{\frac{n}{p_{1}}}}{t^{\frac{n}{q_{1}}+1-n\lambda}%
}dt\leq C_{0}\varphi_{2}\left(  x_{0},r\right)  ,
\]
where $C_{0}$ does not depend on $r>0$,%
\[
\lim_{r\rightarrow0}\frac{\ln \frac{1}{r}}{\varphi_{2}(x_{0},r)}=0
\]
and
\[
c_{\delta}:=%
{\displaystyle \int \limits_{\delta}^{\infty}}
\left(  1+\ln \left \vert t\right \vert \right)  \varphi_{1}\left(
x_{0},t\right)  \frac{t^{\frac{n}{p_{1}}}}{t^{\frac{n}{q_{1}}+1-n\lambda}%
}dt<\infty
\]
for every $\delta>0$, and for $q_{1}<s$ the pair $(\varphi_{1},\varphi_{2})$
satisfies conditions (\ref{2})-(\ref{3}) and also%
\[
\int \limits_{r}^{\infty}\left(  1+\ln \frac{t}{r}\right)  \varphi_{1}\left(
x_{0},t\right)  \frac{t^{\frac{n}{p_{1}}}}{t^{\frac{n}{q_{1}}-\frac{n}%
{s}+1-n\lambda}}dt\leq C_{0}\varphi_{2}(x_{0},r)r^{\frac{n}{s}},
\]
where $C_{0}$ does not depend on $r>0$,%
\[
\lim_{r\rightarrow0}\frac{\ln \frac{1}{r}}{\varphi_{2}(x_{0},r)}=0
\]
and%
\[
c_{\delta^{\prime}}:=%
{\displaystyle \int \limits_{\delta^{\prime}}^{\infty}}
\left(  1+\ln \left \vert t\right \vert \right)  \varphi_{1}\left(
x_{0},t\right)  \frac{t^{\frac{n}{p_{1}}}}{t^{\frac{n}{q_{1}}-\frac{n}%
{s}+1-n\lambda}}dt<\infty
\]
for every $\delta^{\prime}>0$.

Then the operator $[b,T_{\Omega,\alpha}]$ is bounded from $VLM_{p_{1}%
,\varphi_{1}}^{\left \{  x_{0}\right \}  }$ to $VLM_{q,\varphi_{2}}^{\left \{
x_{0}\right \}  }$. Furthermore, we have%
\[
\left \Vert \lbrack b,T_{\Omega,\alpha}]f\right \Vert _{VLM_{q,\varphi_{2}%
}^{\left \{  x_{0}\right \}  }}\lesssim \left \Vert b\right \Vert _{LC_{p_{2}%
,\lambda}^{\left \{  x_{0}\right \}  }}\left \Vert f\right \Vert _{VLM_{p_{1}%
,\varphi_{1}}^{\left \{  x_{0}\right \}  }}.
\]

\end{corollary}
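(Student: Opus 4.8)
The plan is to read this corollary off as the $m=1$ instance of Theorem~\ref{teo5} under the following dictionary: the exponent $p$ of the source space in Theorem~\ref{teo5} plays the role of $p_{1}$ here, the single Campanato exponent plays the role of $p_{2}$ (with $\lambda_{1}=\lambda$), and the target exponent $q_{1}$ of Theorem~\ref{teo5} plays the role of $q$ here. One must first check that this relabeling really does carry the hypotheses of the corollary onto those of Theorem~\ref{teo5}. Since here $\frac{1}{q}=\frac{1}{p_{1}}+\frac{1}{p_{2}}-\frac{\alpha}{n}$, we have $\frac{1}{q}-\frac{1}{p_{2}}=\frac{1}{p_{1}}-\frac{\alpha}{n}=\frac{1}{q_{1}}$, so the power $t^{\frac{n}{q_{1}}+1-n\lambda}$ appearing in the corollary's integral conditions equals $t^{n\left(\frac{1}{q}-\lambda-\frac{1}{p_{2}}\right)+1}$, which is exactly the $m=1$ form of the exponent in conditions~(\ref{10*}) and~(\ref{12*}); the same computation matches the corollary's $q_{1}<s$ conditions with~(\ref{13*})--(\ref{14*}). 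Thus (\ref{2})--(\ref{3}), the limit assumption $\lim_{r\to0}\ln\frac1r/\varphi_{2}(x_{0},r)=0$, and the two finiteness conditions become precisely the $m=1$ specializations of those in Theorem~\ref{teo5}.

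Granting this identification, the norm estimate $\|[b,T_{\Omega,\alpha}]f\|_{VLM_{q,\varphi_{2}}^{\{x_{0}\}}}\lesssim\|b\|_{LC_{p_{2},\lambda}^{\{x_{0}\}}}\|f\|_{VLM_{p_{1},\varphi_{1}}^{\{x_{0}\}}}$ is already available from the preceding local Morrey corollary (the one attributed to \cite{Gurbuz, Gurbuz3}), since $VLM_{p,\varphi}^{\{x_{0}\}}$ is a closed subspace of $LM_{p,\varphi}^{\{x_{0}\}}$ carrying the same norm. Hence the only thing left to prove is that $[b,T_{\Omega,\alpha}]f$ actually belongs to the vanishing subspace, i.e. the implication
\[
\lim_{r\to0}\frac{r^{-n/p_{1}}\|f\|_{L_{p_{1}}(B(x_{0},r))}}{\varphi_{1}(x_{0},r)}=0\ \Longrightarrow\ \lim_{r\to0}\frac{r^{-n/q}\|[b,T_{\Omega,\alpha}]f\|_{L_{q}(B(x_{0},r))}}{\varphi_{2}(x_{0},r)}=0 .
\]

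To prove this implication I would repeat, verbatim up to the relabeling, the argument used for~(\ref{16*}) in the proof of Theorem~\ref{teo5}. First apply the $m=1$ case of Lemma~\ref{lemma2} (that is, estimate~(\ref{200}) with $m=1$) to bound the left-hand quotient by $\|b\|_{LC_{p_{2},\lambda}^{\{x_{0}\}}}\,\varphi_{2}(x_{0},r)^{-1}$ times $\int_{r}^{\infty}(1+\ln\frac{t}{r})\,t^{-n/q_{1}-1+n\lambda}\,\|f\|_{L_{p_{1}}(B(x_{0},t))}\,dt$. Then split the integral at a small fixed $\delta_{0}<1$ into $I_{\delta_{0}}(x_{0},r)$ over $(r,\delta_{0})$ and $J_{\delta_{0}}(x_{0},r)$ over $(\delta_{0},\infty)$, as in~(\ref{17*}). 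For $I_{\delta_{0}}$: given $\varepsilon>0$, choose $\delta_{0}$ so that $t^{-n/p_{1}}\|f\|_{L_{p_{1}}(B(x_{0},t))}/\varphi_{1}(x_{0},t)<\varepsilon/(2CC_{0})$ for $t\le\delta_{0}$, insert this, and bound what remains by the structural condition on $(\varphi_{1},\varphi_{2})$, obtaining $CI_{\delta_{0}}(x_{0},r)<\varepsilon/2$ uniformly for $r\in(0,\delta_{0})$. For $J_{\delta_{0}}$: use $1+\ln\frac{t}{r}\le1+|\ln t|+\ln\frac1r$ to write $J_{\delta_{0}}(x_{0},r)\le(c_{\delta_{0}}+\widetilde{c_{\delta_{0}}}\ln\frac1r)\,\varphi_{2}(x_{0},r)^{-1}\|f\|_{LM_{p_{1},\varphi_{1}}^{\{x_{0}\}}}$, where $c_{\delta_{0}}$ is finite by the finiteness hypothesis; then the assumption $\lim_{r\to0}\ln\frac1r/\varphi_{2}(x_{0},r)=0$, combined with $\sup_{r}1/\varphi_{2}(x_{0},r)<\infty$ from~(\ref{3}), forces $J_{\delta_{0}}(x_{0},r)<\varepsilon/2$ for $r$ small. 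Adding the two bounds gives the implication, hence $[b,T_{\Omega,\alpha}]f\in VLM_{q,\varphi_{2}}^{\{x_{0}\}}$; the case $q_{1}<s$ is handled identically, using the conditions that carry the $r^{n/s}$ weight.

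No step here is genuinely hard: the corollary is a repackaging of Theorems~\ref{teo4} and~\ref{teo5}. The one point that needs care --- and it is bookkeeping, not analysis --- is the index arithmetic of the first paragraph: one must verify that the $t$-exponent produced when the $m=1$ commutator estimate is combined with $\|f\|_{L_{p_{1}}(B(x_{0},t))}\lesssim\varphi_{1}(x_{0},t)\,t^{n/p_{1}}$ is exactly the exponent occurring in the corollary's hypotheses (this is where $\frac{1}{q}-\frac{1}{p_{2}}=\frac{1}{q_{1}}$ enters), and that condition~(\ref{2}) on $\varphi_{2}$ is what legitimizes inferring membership in $VLM_{q,\varphi_{2}}^{\{x_{0}\}}$ from the smallness of the quotient.
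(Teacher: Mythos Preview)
Your proposal is correct and matches the paper's own treatment: the paper states this corollary without proof, immediately after Theorem~\ref{teo5}, because it is precisely the $m=1$ instance of that theorem under the relabeling you describe. Your index check $\frac{1}{q}-\frac{1}{p_{2}}=\frac{1}{q_{1}}$ is the right way to see that the integral conditions here coincide with the $m=1$ forms of (\ref{10*})--(\ref{14*}), and the vanishing argument you outline is exactly the proof of Theorem~\ref{teo5} specialized to a single commutator.
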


\end{document}